\newtheorem{thm}{Theorem}
\newenvironment{thmbis}[1]
  {%
   \addtocounter{thm}{-1}%
   \begin{thm}}
  {\end{thm}}
\newtheorem{cor}[thm]{Corollary}
\newtheorem{lem}[thm]{Lemma}
\newtheorem{prop}[thm]{Proposition}
\theoremstyle{definition}
\newtheorem{defn}[thm]{Definition}
\theoremstyle{remark}
\newtheorem{rem}[thm]{Remark}
\numberwithin{equation}{subsection}
\newcommand{\Sl}{\mathrm{SL}_n}
\newcommand{\A}{\mathbb{A}_f}
\newcommand{\M}{\mathrm{Mat}_n}
\newcommand{\R}{\mathbb{R}}
\newcommand{\Gl}{\mathrm{GL}_n}
\newcommand{\Q}{\mathbb{Q}}
\newcommand{\PP}{\mathbb{P}}
\newcommand{\Zh}{\hat{\mathbb{Z}}}
\newcommand{\Z}{\mathbb{Z}}
\begin{document}

\title[The $\Gl$-Connes-Marcolli Systems]
 {The $\Gl$-Connes-Marcolli Systems}

\author{Yunyi Shen}

\address{Department of Mathematics,FSU}

\email{yshen2@math.fsu.edu}




\keywords{$\mathrm{C}^*$ dynamical systems, KMS states, ergodic theory}





\begin{abstract}
In this paper, we generalize the results of Laca, Larsen, and Neshveyev on the $\mathrm{GL}_2$-Connes-Marcolli system to the $\Gl$ systems. We introduce the $\Gl$-Connes-Marcolli systems and discuss the question
of the existence and the classification of KMS equilibrium states at different inverse temperatures $\beta$.
In particular, using an ergodicity argument, we prove that in the range $n-1<\beta\leq n$, there is only one
KMS state. We show that there are no KMS states for $\beta < n-1$ and not an integer, while we construct KMS
states for integer values of $\beta$ in the range $1\leq \beta \leq n-1$, and we classify extremal KMS states
for $\beta > n$.
\end{abstract}

\maketitle

\section{Introduction}

Two decades ago, Bost and Connes introduced a C*-dynamical system, which we refer to here as the BC-system (see \cite{BB}). This system plays an important role in relating quantum statistical mechanics and number theory. It has the Riemann Zeta function as its partition function. It shows a deep relation between the symmetries and the equilibrium states of the quantum statistical mechanical system and the class field theory.
The values of the ground equilibrium states of the system on a subalgebra of
 rational observables span the maximal abelian extension of the rational number field $\Q$. This is related to the primary case of the Hilbert's 21th problem, see \cite{BB} and Chapter 3 of \cite{CM2}.
The dual system of the BC system is also used to study the Riemann Hypothesis, (see \cite{Co} and
Chapter 4 of \cite{CM2}).
Because of the success of the BC system, several generalizations were later constructed. In particular,
Connes and Marcolli constructed the $\mathrm{GL}_2$ system (which we refer to here as the CM system) in \cite{CM}. Using the properties of this system, Connes, Marcolli and Ramachandran constructed another quantum statistical mechanical system, which has the same properties of the BC system for imaginary quadratic fields,
and relates to another known case of Hilbert's 12th problem: the construction of the maximal abelian extension
of imaginary quadratic fields in terms of values of modular functions at CM points (see \cite{CMR1}, \cite{CMR2}).
The remaining known cases of the Hilbert's 12th problem, namely abelian extensions of CM fields,
have also been interpreted from the quantum statistical mechanical point of view in \cite{Yalk}.
Since then, people attempted to construct many similar (BC-like) systems. Among them, Ha and Paugam
gave an abstract construction of systems by using Shimura varieties, through which BC-like systems
can be built for any number field (see \cite{HP}).
As we mentioned above, the most significant properties of the BC like systems are reflected by the action of symmetric group of the system on the equilibrium states.
These states are described by the KMS (Kubo-Martin-Schwinger)-condition. They are called the KMS states.
It is important to study the phase transitions of a system, and how the set of KMS states changes
by the effect of phase transitions.

In \cite{LLN}, Laca, Larsen and Neshveyev developed a method based on Hecke operators and ergodic theory and gave a thorough study of the KMS states over the ($\mathrm{GL}_2$-) Connes-Marcolli system. At the end of their paper, they suggested that their results could be generalized to a $\Gl$-Connes-Marcolli system. In this paper, we develop a similar set of tools, with Hecke operators and ergodic theory, adapted to the
$\Gl$-setup and we prove the corresponding results of the $\Gl$-Connes-Marcolli systems.

In section 2, we introduce the $\Gl$-Connes-Marcolli systems as suggested in \cite{LLN}. As usual, those systems are constructed through group actions and related groupoids. Then we analyse the fix points of the group action. Finally, we establish a one-to-one correspondence between the set of KMS states and the set of Borel measures with certain scaling and normalizing conditions, see Theorem \ref{thmKMS}.

Next, in section 3, we first recall some basic concepts of Hecke algebras and some integration formulas related to the Hecke operators, which will play a major role in the proof the main theorem, Theorem \ref{main}. In 3.2, the main part of this paper, we study the phase transition problem and generalize the results in \cite{LLN} to the $\Gl$ system cases. The correspondence between the KMS states and the Borel measures allows us to study the properties of the measures instead of directly studying the KMS states. We prove a Key Lemma (Lemma \ref{Key}) and a corollary (Corollary \ref{Keycor}), which show that for probability measures on ${\rm PGL}_n^+(\R)\times {\rm Mat}_n(\Q_p)$
satisfying a certain $\beta$-dependent
scaling condition and normalization, the set ${\rm PGL}_n^+(\R)\times {\rm GL}_n(\Q_p)$ is
of full measure, provided $\beta$ is not equal to an integer between $0$ and $n-1$.
We then show that for  inverse temperatures $\beta<n-1$ and $\beta\neq 0,1,\ldots,n-1$ there is no KMS$_\beta$ state. We also prove the main theorem: in the range $\beta\in(n-1,n]$, there can be only one KMS$_\beta$ state for the $\Gl$-Connes-Marcolli system (if one exists at all). In 3.3, we show existence of a
KMS$_\beta$ state in this range $\beta\in(n-1,\,n]$, by explicitly showing
how it can be constructed. We also discuss the existence of KMS states at the dividing points
$\beta=0,1,\ldots,n-1$ and we give a classification of the extremal KMS states in the range $\beta>n$.

\section{The $\Gl$-Connes-Marcolli Systems}
Let us first fix some notations. Let $\PP=\mathrm{PGL}_n^+(\R)$ and $\Gamma=\Sl(\Z)$. Let
$Y=\PP\times\M(\Zh)$ and $X=\PP\times\M(\A)$ where $\A$ is the finite Adele ring over $\Q$.
Let $\Gamma\backslash\Gl^+(\Q)\times_\Gamma X$ be the quotient space of $\Gl^+(\Q)\times X$ by the $\Gamma\times\Gamma$-action:
$$(\gamma_1,\gamma_2)(g,x)=(\gamma_1g\gamma^{-1}_2,\gamma_2x)\quad\mathrm{with}\,(g,x)\in\Gl^+(\Q)\times X\,\mathrm{and}\,\gamma_{1,2}\in\Gamma.$$
The subspace $\Gamma\backslash\Gl^+(\Q)\boxtimes_\Gamma Y$ of $\Gamma\backslash\Gl^+(\Q)\times_\Gamma X$ is the space whose elements can be represented by pairs $(g,y)\in\Gl^+(\Q)\times Y$ such that $gy\in Y$.

We construct an involutive associative algebra and a Hilbert space representation associated to
the space $\Gamma\backslash\Gl^+(\Q)\boxtimes_\Gamma Y$, which is analogous to the
algebra of the ${\rm GL}_2$ system constructed in \cite{CM}, see also \S 5.2 of \cite{CM2}.

For $f_1, f_2\in C_c(\Gamma\backslash\Gl^+(\Q)\boxtimes_\Gamma Y)$, there is naturally a convolution defined as
\begin{equation}\label{convprod}
f_1\ast f_2(g,y)=\sum\limits_{h\in\Gamma\backslash\Gl^+(\Q), hy\in Y}f_1(gh^{-1},hy)f_2(h,y),
\end{equation}
and there is also an involution $$f_1^*(g,y)=\overline{f_1(g^{-1},gy)}.$$
If we set $G_y=\{g\in\Gl^+(\Q)\mid\,gy\in Y\}$, then the action $$\pi_y(f)\delta(g)=\sum\limits_{h\in\Gamma\backslash G_y}f(gh^{-1},hy)\delta(h)$$ gives a representation of $C_c(\Gamma\backslash\Gl^+(\Q)\boxtimes_\Gamma Y)$ on the Hilbert space $\mathfrak{H}_y=\ell^2(\Gamma\backslash G_y)$. Then one defines the norm as in Definition~3.43 of \cite{CM2} as
$$\|f\|=\sup\limits_{y\in Y}\{\|\pi_y(f)\|\}$$ on $C_c(\Gamma\backslash\Gl^+(\Q)\boxtimes_\Gamma Y)$. The completion of $C_c(\Gamma\backslash\Gl^+(\Q)\boxtimes_\Gamma Y)$ in this norm is a C* algebra, which is denoted by $\mathrm{C}_r^*(\Gamma\backslash\Gl^+(\Q)\boxtimes_\Gamma Y)$.

\begin{defn} \label{GLnalg}
The $\Gl$-Connes-Marcolli algebra is the C*-algebra $$\mathcal{A}=\mathrm{C}_r^*(\Gamma\backslash\Gl^+(\Q)\boxtimes_\Gamma Y).$$
\end{defn}

Recall that a time evolution on a $C^*$-algebra $\mathcal{A}$ is a continuous one-parameter family of
automorphisms $\sigma: {\mathbb R} \to {\rm Aut}(\mathcal{A})$.

\begin{lem}\label{sigmatlem}
Setting $\sigma_t(f)(g,x)=\mathrm{det}(g)^{it}f(g,x)$, for $f\in C_c(\Gamma\backslash \Gl^+(\mathbb{Q})\boxtimes_\Gamma Y)$ determines a time evolution $\sigma_t$ on the $C^*$-algebra $\mathcal{A}$
of Definition~\ref{GLnalg}.
\end{lem}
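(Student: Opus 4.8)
The plan is to verify the defining properties of a time evolution in turn: that each $\sigma_t$ is a well-defined $*$-automorphism of the dense $*$-subalgebra $C_c(\Gamma\backslash\Gl^+(\Q)\boxtimes_\Gamma Y)$, that $t\mapsto\sigma_t$ is a one-parameter group, that each $\sigma_t$ extends isometrically to $\mathcal{A}$, and that the extended group is point-norm continuous. The algebraic properties all rest on two elementary facts: $\det(g)>0$ for $g\in\Gl^+(\Q)$, so that $\det(g)^{it}=e^{it\log\det(g)}$ is a well-defined unimodular scalar, and $\det$ is multiplicative. Well-definedness on the quotient is immediate since $\det(\gamma_1 g\gamma_2^{-1})=\det(g)$ for $\gamma_{1},\gamma_{2}\in\Gamma=\Sl(\Z)$, so $\det$ descends to a function on $\Gamma\backslash\Gl^+(\Q)\boxtimes_\Gamma Y$ and $\sigma_t(f)$ is again continuous, compactly supported and constant on $\Gamma\times\Gamma$-orbits. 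For multiplicativity I would compute, using the cocycle identity $\det(gh^{-1})^{it}\det(h)^{it}=\det(g)^{it}$ term by term in the convolution sum \eqref{convprod}, that $\sigma_t(f_1\ast f_2)=\sigma_t(f_1)\ast\sigma_t(f_2)$; for the involution, $\overline{\det(g^{-1})^{it}}=\det(g)^{it}$ gives $\sigma_t(f^*)=\sigma_t(f)^*$. Finally $\sigma_0=\mathrm{id}$ and $\sigma_s\circ\sigma_t=\sigma_{s+t}$ are read off directly from $\det(g)^{is}\det(g)^{it}=\det(g)^{i(s+t)}$, so each $\sigma_t$ is a $*$-automorphism of $C_c$ with inverse $\sigma_{-t}$.

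To extend $\sigma_t$ to $\mathcal{A}$ I would implement it by unitaries in each of the defining representations $\pi_y$ on $\mathfrak{H}_y=\ell^2(\Gamma\backslash G_y)$. Since $\det$ is constant on the left cosets in $\Gamma\backslash G_y$, the diagonal operator $(U_{y,t}\xi)(g)=\det(g)^{it}\xi(g)$ is a well-defined unitary on $\mathfrak{H}_y$. A direct computation with the formula for $\pi_y$ shows
$$\pi_y(\sigma_t(f))=U_{y,t}\,\pi_y(f)\,U_{y,t}^*,$$
again because the factors $\det(g)^{it}$ and $\det(h)^{-it}$ combine into $\det(gh^{-1})^{it}$. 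Hence $\norm{\pi_y(\sigma_t(f))}=\norm{\pi_y(f)}$ for every $y\in Y$, and taking the supremum over $y$ yields $\norm{\sigma_t(f)}=\norm{f}$. Thus $\sigma_t$ is isometric on $C_c$ and extends to an isometric $*$-automorphism of $\mathcal{A}$, with inverse the extension of $\sigma_{-t}$.

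It remains to prove point-norm continuity, which I expect to be the only genuinely non-formal step. Because each $\sigma_t$ is isometric and the $\sigma_t$ form a group, it suffices, after a standard $\eps/3$ approximation of $a\in\mathcal{A}$ by an element of $C_c$, to show that $\norm{\sigma_\tau(f)-f}\to0$ as $\tau\to0$ for $f\in C_c$. The obstacle is that the C*-norm is a supremum of operator norms and does not directly see the pointwise estimate $\abs{(\sigma_\tau(f)-f)(g,x)}=\abs{\det(g)^{i\tau}-1}\,\abs{f(g,x)}$. To bridge this I would dominate the C*-norm by the $I$-norm $\norm{\cdot}_I$ (the maximum of the supremal row and column sums of the matrix coefficients): a Schur-test estimate on the coefficients $f(gh^{-1},hy)$ of $\pi_y(f)$ gives $\norm{\pi_y(f)}\le\norm{f}_I$ uniformly in $y$, hence $\norm{f}\le\norm{f}_I$. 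Since $f$ has compact support, $\det(g)$ ranges over a compact subset of $(0,\infty)$ there, so $\eps(\tau):=\sup_{(g,x)\in\operatorname{supp}f}\abs{\det(g)^{i\tau}-1}\to0$ as $\tau\to0$; the pointwise bound then yields $\norm{\sigma_\tau(f)-f}\le\norm{\sigma_\tau(f)-f}_I\le\eps(\tau)\,\norm{f}_I\to0$. Combining this with the isometry of the $\sigma_\tau$ establishes continuity of $t\mapsto\sigma_t(a)$ for all $a\in\mathcal{A}$, completing the verification that $\sigma$ is a time evolution.
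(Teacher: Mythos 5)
Your proof is correct and complete. The mathematical core is the same as the paper's: everything hinges on $\det$ being a group homomorphism $\Gl^+(\Q)\to\R^*_+$ that is trivial on $\Gamma=\Sl(\Z)$, which is exactly the cocycle identity $\det(gh^{-1})^{it}\det(h)^{it}=\det(g)^{it}$ you use to establish compatibility with the convolution \eqref{convprod}. The difference is one of economy: the paper verifies only this multiplicativity and delegates all the remaining checks (well-definedness on the quotient, $*$-compatibility, the isometric extension to the completion, and point-norm continuity) to the general statement in Section 2 of \cite{LLN}. You supply these from scratch; in particular your unitary implementation $\pi_y(\sigma_t(f))=U_{y,t}\,\pi_y(f)\,U_{y,t}^*$ --- which is legitimate precisely because $\det$ is constant on the left cosets $\Gamma g$, so the diagonal operator descends to $\ell^2(\Gamma\backslash G_y)$ --- and the $I$-norm/Schur-test domination for the continuity at $t=0$ are exactly the standard arguments the citation hides. (The finiteness of the $I$-norm on $C_c$ uses that a compactly supported $f$ meets only finitely many double cosets $\Gamma g\Gamma$, each containing finitely many left cosets by the Hecke-pair property; this also makes $\det$ take only finitely many values on the support, which is what your $\eps(\tau)\to 0$ estimate needs.) Both routes are valid; yours is self-contained at the cost of length, and correctly isolates continuity as the only non-formal step.
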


\proof In general, for an algebra of the form $C_c(\Gamma\backslash \Gl^+(\mathbb{Q})\boxtimes_\Gamma Y)$,
with the convolution product \eqref{convprod}, any group homomorphism $N:\Gl^+(\mathbb{Q}) \to \R^*_+$ with
the property that $\Gamma \subset {\rm Ker}(N)$ determines a time evolution by setting $\sigma_t(f)(g,x)=N(g)^{it} f(g,x)$,
see  Section 2 of \cite{LLN}.
Indeed, this property suffices to ensure that, with respect to the convolution product \eqref{convprod}, one has
$\sigma_t(f_1 \ast f_2)=  \sigma_t(f_1)\ast \sigma_t(f_2)$. Clearly setting $N(g)=\det(g)$ has the desired properties.
\endproof

Recall that an element $a\in\mathcal{A}$ is said to be {\em entire} if the function $t\mapsto\sigma_t(a)$ can be extended to an entire function on the complex number field $\mathbb{C}$, see Definition 2.5.20 of \cite{BR}. We also recall that a KMS$ _\beta$ state is a $\sigma_t$ invariant state $\varphi$ over $\mathcal{A}$ such that, for all entire elements $a,b$ of $\mathcal{A}$, the
relation $\varphi(ab)=\varphi(b\sigma_{i\beta}(a))$ holds, see Definition 5.3.1 of \cite{BR2}.

If we identify the space $\Gamma\backslash\Gl^+(\Q)\times_\Gamma\PP\times\{0\}$ with the space $\Gamma\backslash\Gl^+(\Q)\times_\Gamma\PP$, then the restriction of the *-algebra structure and representation of $$C_c(\Gamma\backslash\Gl^+(\Q)\boxtimes_\Gamma Y)$$ on $$ C_c(\Gamma\backslash\Gl^+(\Q)\times_\Gamma\PP\times\{0\})$$ makes $$C_c(\Gamma\backslash\Gl^+(\Q)\times_\Gamma\PP)$$ a normed *-algebra. Then the completion of $$C_c(\Gamma\backslash\Gl^+(\Q)\times_\Gamma\PP)$$ in the induced norm, which is denoted by $$\mathcal{B}=\mathrm{C}_r^*(\Gamma\backslash\Gl^+(\Q)\times_\Gamma\PP),$$ is also a $C^*$ algebra. In a similar way, the restriction of $\sigma_t$ to $$C_c(\Gamma\backslash\Gl^+(\Q)\times_\Gamma\PP\times\{0\})$$ defines a time evolution on $\mathcal{B}$. So $(\mathcal{B},\sigma_t)$ also forms a $C^*$-dynamical system.

Given a matrix of the form
$$g=\begin{pmatrix}k\\&\ddots\\&&k\end{pmatrix}$$ with $k\in\mathbb{N}$,
consider the function $u_g$ that takes values $u_g(h,x)=1$ if $h\in\Gamma.g$ and $u_g(h,x)=0$ otherwise.
This defines a unitary multiplier $u_g$ of $\mathcal{B}$ which  is an eigenfunction of $\sigma_t$, namely such that
$$\sigma_t(u_g)=k^{nit}u_g.$$ The definition of KMS states and a direct calculation then show the following lemma.

\begin{lem}\label{noKMS}
There is no KMS$ _\beta$ state over $\mathcal{B}$ if $\beta\neq0$.
\end{lem}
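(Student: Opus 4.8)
The plan is to exploit the unitary multiplier $u_g$ and its behavior under the time evolution, which the excerpt has essentially handed us. The key structural fact is that $u_g$ is an eigenfunction of $\sigma_t$ with a nontrivial eigenvalue, namely $\sigma_t(u_g)=k^{nit}u_g$ whenever $g$ is the scalar matrix $k\cdot I$ with $k\in\mathbb{N}$, $k\geq 2$. My strategy is to evaluate the KMS condition on the pair $(u_g, u_g^*)$ and derive a scaling identity that can only hold when $\beta=0$.

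First I would check that $u_g$ is entire, which is immediate since $t\mapsto\sigma_t(u_g)=k^{nit}u_g$ manifestly extends to the entire function $z\mapsto k^{niz}u_g$ on $\mathbb{C}$; this lets us legitimately plug $\sigma_{i\beta}$ into the KMS relation. Next, suppose $\varphi$ is a KMS$_\beta$ state over $\mathcal{B}$. Applying the defining relation $\varphi(ab)=\varphi(b\,\sigma_{i\beta}(a))$ with $a=u_g$ and $b=u_g^*$ gives
\begin{equation}\label{kmsug}
\varphi(u_g u_g^*)=\varphi\bigl(u_g^*\,\sigma_{i\beta}(u_g)\bigr)=k^{-n\beta}\,\varphi(u_g^* u_g),
\end{equation}
where I have used $\sigma_{i\beta}(u_g)=k^{ni\cdot(i\beta)}u_g=k^{-n\beta}u_g$ and pulled the scalar out by linearity of $\varphi$. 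Since $u_g$ is unitary we have $u_g u_g^*=u_g^* u_g=1$, so both sides of \eqref{kmsug} involve $\varphi(1)=1$. Substituting yields $1=k^{-n\beta}$, forcing $k^{-n\beta}=1$, and since $k\geq 2$ and $n\geq 1$ this is possible only when $\beta=0$. Hence no KMS$_\beta$ state can exist for $\beta\neq 0$.

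I expect the main obstacle to be the bookkeeping around the word \emph{multiplier}: $u_g$ is a unitary multiplier of $\mathcal{B}$ rather than an element of $\mathcal{B}$ itself, so one must justify that the KMS relation and the $\sigma_t$-eigenvalue property extend to multipliers. The cleanest route is to approximate, or to invoke the standard fact (as in the $\mathrm{GL}_2$ case of \cite{LLN} and the general formalism of \cite{BR2}) that a KMS state extends canonically to the multiplier algebra and that the eigenvalue relation $\sigma_t(u_g)=k^{nit}u_g$ persists there; I would cite this rather than re-derive it. A secondary point worth a sentence is verifying the unitarity $u_g^* u_g=u_g u_g^*=1$ directly from the convolution product \eqref{convprod} and the involution, using that $g=k\cdot I$ is central and lies in $\Gl^+(\Q)$ with $\Gamma g=g\Gamma$; this is the routine calculation the excerpt alludes to. Once these formal points are in place, the contradiction $k^{-n\beta}=1$ is immediate and the lemma follows.
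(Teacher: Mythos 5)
Your proposal is correct and follows essentially the same route as the paper: both evaluate the KMS relation on the pair $(u_g,u_g^*)$ for the scalar matrix $g=k\cdot I$ and derive the contradiction $1=k^{-n\beta}$. The extra care you take about entirety of $u_g$, the extension of the KMS condition to the multiplier algebra, and the explicit requirement $k\geq 2$ are points the paper leaves implicit, but they do not change the argument.
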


\proof If $\phi$ is a KMS$ _\beta$ state over $\mathcal{B}$, then by the property of the KMS state we have $$1=\phi(u_g*u^*_g)=\phi(u^*_g*\sigma_{i\beta}(u_g))=k^{-n\beta}.$$
This is absurd if $\phi\neq0$. So all the KMS$ _\beta$ states vanish on $\mathcal{B}$.
\endproof

The next lemma characterizes the fixed points of $Y$ under the action of $\Gl^+(\Q)$.
\begin{lem}\label{fixedpt}
Let $g\in\Gl^+(\Q)$ and $g\neq I$. If $gy=y$ with $y\in Y=\PP\times\M(\hat{\mathbb{Z}})$, then $y=(x,0)$, $x\in\PP$.
\end{lem}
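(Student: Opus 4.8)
The plan is to decompose the fixed-point equation into its archimedean and finite parts and treat them separately. Write $y=(x,m)$ with $x\in\PP$ and $m\in\M(\Zh)$, and let $\bar g$ denote the image of $g$ under the natural map $\Gl^+(\Q)\to\PP=\mathrm{PGL}_n^+(\R)$. The action is diagonal: $g$ acts on the first coordinate by left translation $x\mapsto\bar g x$ in the group $\PP$, and on the second coordinate by left matrix multiplication $m\mapsto gm$ inside $\M(\A)$ (this is precisely the restriction to $\Gl^+(\Q)$ of the $\Gamma$-action appearing in the definition of $\Gamma\backslash\Gl^+(\Q)\times_\Gamma X$). Hence $gy=y$ is equivalent to the pair of equations $\bar g x=x$ in $\PP$ and $gm=m$ in $\M(\A)$.

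First I would analyze the archimedean equation. Since the left-translation action of the group $\PP$ on itself is free, $\bar g x=x$ forces $\bar g$ to be the identity of $\PP$, i.e. $g$ lies in the kernel of $\Gl^+(\Q)\to\mathrm{PGL}_n^+(\R)$, which is the group of scalar matrices. As $g$ has rational entries, $g=\lambda I$ for some $\lambda\in\Q^*$, and $g\neq I$ gives $\lambda\neq1$.

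It remains to feed this into the finite equation $gm=m$, which now reads $\lambda m=m$, that is $(\lambda-1)m=0$ in $\M(\A)$. The subtle point — and the one place where the argument is not purely formal — is that $\A=\mathbb{A}_f$ has zero divisors, so one cannot in general cancel a nonzero factor. Here, however, $\lambda-1$ is a \emph{global} nonzero rational: it is invertible in every completion $\Q_p$, and its inverse lies in $\Z_p$ for all but finitely many $p$, so $\lambda-1\in\A^*$. Multiplying $(\lambda-1)m=0$ by $(\lambda-1)^{-1}$ yields $m=0$, whence $y=(x,0)$ with $x\in\PP$, as claimed.

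The main obstacle is exactly this last step: recognizing that although $\M(\A)$ is far from an integral domain, the specific scalar $\lambda-1$ produced by the archimedean analysis is a unit of $\A$ and can therefore be cancelled. Everything else is a routine separation of variables together with the freeness of left translation on $\PP$.
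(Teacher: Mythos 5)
Your proof is correct and follows essentially the same route as the paper's: the fixed-point condition on the $\PP$-factor forces $g$ to be a rational scalar $\lambda I$ with $\lambda\neq 1$, and then $(\lambda-1)m=0$ forces $m=0$. Your explicit observation that $\lambda-1$, being a nonzero global rational, is a unit of $\A$ and can therefore be cancelled despite $\M(\A)$ having zero divisors is a welcome detail that the paper's proof leaves implicit.
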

\begin{proof}Since $\PP=\mathrm{PGL}_n^+(\R)\subset\mathrm{PGL}_n(\R)=\Gl(\R)/\R^*$, for $g\in\Gl^+(\Q)$, $x\in\PP$, $gx=x$ only if $g=rI$ where $r\in \R^*$ (actually $r\in \Q^*$ here).
However, for $h\in\M(\A)$, $rh=h$ only if $r=1$ or $h=0$. By assumption $g\neq I$, so $r\neq1$. This means we must have $h=0$.
So when $gy=y$ where $y=(x,h)\in Y$, we obtain $y=(x,0)$.\end{proof}

Let $$E:\,\mathcal{A}\rightarrow C_0(\Gamma\backslash Y),\ \ \ \  E(f)(y)=f(I,y)$$ be the canonical conditional expectation. The Poposition 2.1 in \cite{LLN} implies that, when the action of $\Gl^+(\Q)$ is free, the map $$\mu\mapsto\varphi(f)=\int_{\Gamma\backslash Y}E(f)(y)\,\,d\nu(y)$$ gives a one-to-one correspondence between the KMS$_\beta$ weights over $\mathcal{A}$ and
 the Radon measures $\mu$ on $Y$ satisfying the scaling condition
\begin{equation}\label{muscale}
\mu(gB)=\det(g)^{-\beta}\mu(B)
\end{equation}
for $g\in\Gl^+(\Q)$ and Borel set $B\subset Y$ with $gB\subset Y$.

The scaling condition \eqref{muscale} also implies that $\mu$ is a $\Gamma$-invariant measure on $Y$. It determines the measure $\nu$ on $\Gamma\backslash Y$ such that $$\int_Yfd\mu=\int_{\Gamma\backslash Y}\sum_{x\in\Gamma y}f(x)d\nu([y]).$$ For a measure $\mu$ satisfying the scaling condition \eqref{muscale}, the corresponding KMS$_\beta$ weight is a state iff the induced measure $\nu$ on $\Gamma\backslash Y$ is a probability measure, i.e. $\nu(\Gamma\backslash Y)=1$. In the paper, sometimes to save notation, we still use $\mu$ for the induced measure on $\Gamma\backslash Y$ if there is no otherwise meaning in context.

As we have seen in Lemma~\ref{fixedpt} above, the action of $\Gl^+(\Q)$
in the $\mathrm{GL}_n$-system is not totally free, but the fixed points are all contained in
$\PP\times\{0\}$. So combining the previous two lemmata we still have the following theorem.

\begin{thm}\label{thmKMS}
For the $\Gl$-Connes-Marcolli system, when $\beta\neq0$, there is a one-to-one correspondence between KMS$_{\beta}$-states over $\mathcal{A}$ and Radon measures $\mu$ on $Y$ such that $\mu$ satisfies the scaling condition \eqref{muscale} and induces a probability measure $\nu$ on $\Gamma\backslash Y$.
\end{thm}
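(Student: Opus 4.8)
The plan is to obtain the theorem essentially from the tools already assembled, reducing the present non-free situation to the free one treated in \cite{LLN}. Proposition 2.1 of that paper gives, \emph{whenever the $\Gl^+(\Q)$-action is free}, a one-to-one correspondence $\mu \mapsto \varphi$, $\varphi(f)=\int_{\Gamma\backslash Y} E(f)\,d\nu$, between KMS$_\beta$ weights over $\mathcal{A}$ and Radon measures on $Y$ obeying the scaling condition \eqref{muscale}; and, as recorded in the discussion above, such a weight is a state precisely when the induced $\nu$ on $\Gamma\backslash Y$ is a probability measure. The only thing that prevents a direct application here is that the action has fixed points, so the main task is to show these fixed points are invisible to KMS$_\beta$ states.

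By Lemma \ref{fixedpt}, every nontrivial fixed point of the $\Gl^+(\Q)$-action lies in the closed invariant set $\PP\times\{0\}$, so the action is free on the open invariant complement $Y\setminus(\PP\times\{0\})$. I would then take any KMS$_\beta$ state $\varphi$ over $\mathcal{A}$ and restrict it to the $\sigma_t$-invariant subalgebra $\mathcal{B}$, which is the part of the system supported over $\PP\times\{0\}$. This restriction is again a KMS$_\beta$ functional, so Lemma \ref{noKMS} forces it to vanish for $\beta\neq0$. Translating this vanishing through the conditional expectation $E(f)(y)=f(I,y)$ and the identity $\varphi(f)=\int_{\Gamma\backslash Y}E(f)\,d\nu$ shows that the induced measure assigns no mass to the fixed locus, i.e. $\nu(\PP\times\{0\})=0$, equivalently $\mu(\PP\times\{0\})=0$. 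Consequently $\mu$ is concentrated on the free locus, where Proposition 2.1 of \cite{LLN} applies verbatim, and the normalization criterion then promotes the weight-measure dictionary to the desired bijection between KMS$_\beta$ states and scaled Radon measures inducing a probability measure on $\Gamma\backslash Y$.

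I expect the principal obstacle to lie in the gluing: making precise that measure-theoretic concentration on the free locus is exactly what is needed to transport the free-action correspondence of \cite{LLN} to the full, non-principal groupoid. In particular one must verify that the restriction of a KMS$_\beta$ state to $\mathcal{B}$ is genuinely a KMS$_\beta$ functional to which Lemma \ref{noKMS} applies, and that the scaling condition \eqref{muscale} is preserved when one regards $\mu$ as extended by zero across the singular stratum $\PP\times\{0\}$, so that the resulting global object is again a bona fide Radon measure on all of $Y$.
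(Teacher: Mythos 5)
Your strategy coincides with the paper's: locate the fixed points in $\PP\times\{0\}$ via Lemma \ref{fixedpt}, use Lemma \ref{noKMS} to show they are invisible to KMS$_\beta$ states, and apply Proposition 2.1 of \cite{LLN} on the free remainder. The step that does not work as written is ``restrict $\varphi$ to the subalgebra $\mathcal{B}$.'' The algebra $\mathcal{B}=\mathrm{C}_r^*(\Gamma\backslash\Gl^+(\Q)\times_\Gamma\PP)$ is not a subalgebra of $\mathcal{A}$: the locus $\PP\times\{0\}$ is closed with empty interior in $Y$, so a continuous compactly supported function on $\Gamma\backslash\Gl^+(\Q)\boxtimes_\Gamma Y$ supported over it is identically zero, and $\mathcal{B}$ arises only as the quotient $\mathcal{A}/\mathcal{J}$ by the ideal $\mathcal{J}=\mathrm{C}^*_r(\Gamma\backslash G\boxtimes_\Gamma(Y\setminus(\PP\times\{0\})))$. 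A state on $\mathcal{A}$ therefore cannot be restricted to $\mathcal{B}$; it only \emph{descends} to $\mathcal{B}$ when it vanishes on $\mathcal{J}$. There is also a circularity in the next step: the identity $\varphi(f)=\int_{\Gamma\backslash Y}E(f)\,d\nu$, which you invoke to conclude $\nu(\PP\times\{0\})=0$, is precisely the content of the correspondence being established, and \cite{LLN} only guarantees it once freeness (or negligibility of the isotropy) is already in hand.

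The paper runs the reduction in the opposite direction. If a KMS$_\beta$ state $\phi$ on $\mathcal{A}$ vanishes on the ideal $\mathcal{J}$, it descends to a KMS$_\beta$ state on $\mathcal{B}=\mathcal{A}/\mathcal{J}$, which Lemma \ref{noKMS} forbids for $\beta\neq0$ unless $\phi=0$; hence two KMS$_\beta$ states agreeing on $\mathcal{J}$ agree on all of $\mathcal{A}$, so KMS$_\beta$ states are completely determined by their restrictions to $\mathcal{J}$. On $\mathcal{J}$, which lives over $Y\setminus(\PP\times\{0\})$ where the action is free by Lemma \ref{fixedpt}, Proposition 2.1 of \cite{LLN} applies and yields the bijection with scaled Radon measures inducing a probability measure on $\Gamma\backslash Y$. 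Your ingredients are the right ones, but the gluing you flagged as the ``principal obstacle'' must go through the ideal $\mathcal{J}$ rather than through a (nonexistent) copy of $\mathcal{B}$ inside $\mathcal{A}$.
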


\proof Let $\mathcal{J}$ be the ideal $\mathrm{C}^*_r(\Gamma\backslash G\boxtimes_\Gamma(Y-(\PP\times\{0\})))$. Notice that if a KMS$ _\beta$ state $\phi$ on $\mathcal{A}$ vanishes on $\mathcal{J}$ then it is a KMS state over $\mathcal{A}/\mathcal{J}=\mathcal{B}$. Then by Lemma 3, we see that $\phi$ vanishes on $\mathcal{A}$. Thus, given a KMS state $\varphi$ on $\mathcal{A}$, if we extend the KMS state $\varphi|_{\mathcal{J}}$ to a KMS state $\phi$ on $\mathcal{A}$, then $\phi-\varphi$ vanishes over $\mathcal{A}$. Thus, the KMS$ _\beta$ states on $\mathcal{A}$ are totally determined by the KMS$ _\beta$
states on $\mathcal{J}$.

By Lemma 4 we see that the action of $\Gl^+(\Q)$ on $Y-(\PP\times\{0\})$ is free. So by the discussion before the theorem, there is a one-to-one correspondence between KMS$_{\beta}$-states and Radon measures on $Y$ that satisfy the scaling condition \eqref{muscale}.
\endproof

\begin{rem} In the theorem above, the only restriction on $\beta$ is nonzero.
However, for the $\Gl$-system, the KMS states do not exist for all $\beta$'s. Namely, due to the 1-1 correspondence between the KMS states and the Radon measures above,
Radon measures satisfying the scaling condition \eqref{muscale} only exist  for a certain
range of values of the parameter $\beta$. This will be explained in the next section.\end{rem}

\section{The Phase Transition}
\subsection{Hecke Pairs} To prove our main theorem, we extend the approach and the results of \cite{LLN}
to the case of the $\Gl$ groups. First we need to recall some concepts of Hecke Pairs and some related formulas.

We recall the following notions from \cite{K}, I. A Hecke pair is a pair of groups $(G,H)$ such that $H\subset G$ and for any $g\in H$ the set $H/(g^{-1}Hg\cap H)$ is finite.  From \cite{K}, I. lemma 3.1, we also see $\#(H/(g^{-1}Hg\cap H))=\#(H\backslash HgH)$. If $f$ is an $H$-invariant function over a space $X$ with a $G$ action, then for the Hecke pair we define the Hecke operator $T_g$ for $g\in G$ to be
$$T_gf(x)=\frac{1}{\#(H/(g^{-1}Hg\cap H))}\sum\limits_{h\in H\backslash HgH}f(hx).$$

In this paper, we mainly focus on the Hecke pair $(G,\,\Gamma)=(\Gl^+(\Q),\,\Sl(\Z))$ (\cite{A}, Lemma 3.3.1. or \cite{K}, V. Corollary 5.3).

If the measure $\mu$ satisfies the scaling conditions \eqref{muscale}, one has $$\int_{\Gamma\backslash Y}fd\nu=\det(g)^{-\beta}\int_{\Gamma\backslash Y}T_gfd\nu,$$ where $\nu$ is the induced measure by $\mu$. (see \cite{LLN}, Lemma 2.6 and the comment after the lemma).

Also, if we let $Y\subset X$ for some $X$ with a free $G$-action, and let $Z\subset Y$ such that if $h\in G$ and $hz\in Z$ for some $z\in Z$ then $h\in\Gamma$, then once we have some $g\in G$ with $gZ\subset Y$ the following formula holds
\begin{equation}\label{eqmes}\nu(\Gamma\backslash\Gamma g\Gamma Z)=\det(g)^{-\beta}\#(H/(g^{-1}Hg\cap H))\nu(\Gamma\backslash\Gamma Z).\end{equation}
This formula can be seen in a direct calculation or by \cite{LLN}, Lemma 2.7.

\subsection{Phase Transition}

Because of the one-to-one correspondence between the KMS states on $\mathcal{A}$ and the Borel measures $\mu$ with scaling condition \eqref{muscale}, instead of studying the structure of the KMS, we rather study the properties of the corresponding Borel measures on $Y$. From the definition of the algebra $\mathcal{A}$, we see it is not very convenient to work directly with the measure on $Y$. For instance, the group action on $Y$ is only partially defined: for $g\in G$ and $y\in Y$, we cannot always guarantee that $gy\in Y$. So we want to extend the measure to a larger space to make the discussion more convenient. For this purpose, we need a lemma from \cite{LLN}.

\begin{lem}\label{ext}(\cite{LLN}, Lemma 2.2) Let $X$ be a space with a free $G$-action and $Y\subset X$ a clopen subset. Given a measure $\mu$ on $Y$ satisfying condition \eqref{muscale}, then we can uniquely extend $\mu$ to a Radon measure on $GY=\{gy\mid\,g\in G,\,y\in Y\}\subset X$
which also satisfies the scaling condition \eqref{muscale} for any Borel set $B\subset GY$.\end{lem}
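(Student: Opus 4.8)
The plan is to build the extension one translate at a time and then glue. Since $G=\Gl^+(\Q)$ is countable and $Y$ is clopen while each $g\in G$ acts as a homeomorphism of $X$, the sets $gY$ are clopen and $GY=\bigcup_{g\in G}gY$ is a countable union of clopen translates. On a single translate the scaling condition \eqref{muscale} already forces the definition: for a Borel set $B\subset gY$ one has $g^{-1}B\subset Y$ and \eqref{muscale} demands $\mu(B)=\mu\big(g(g^{-1}B)\big)=\det(g)^{-\beta}\mu(g^{-1}B)$, so I would set
\[
\tilde\mu(B)=\det(g)^{-\beta}\,\mu(g^{-1}B).
\]
This pins down any candidate extension on each $gY$ and, once consistency and additivity are verified, yields uniqueness for free.

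The main obstacle is to check that this local formula is independent of the representing translate, i.e.\ that it is consistent on the overlaps $gY\cap g'Y$. Suppose $B\subset gY\cap g'Y$, so that both $g^{-1}B\subset Y$ and $(g')^{-1}B\subset Y$. Writing $h=(g')^{-1}g\in G$ one has $(g')^{-1}B=h\,(g^{-1}B)$ with $g^{-1}B\subset Y$ and $h(g^{-1}B)\subset Y$, so the scaling condition \eqref{muscale}, applied to the set $g^{-1}B$ and the element $h$, gives
\[
\mu\big((g')^{-1}B\big)=\det(h)^{-\beta}\,\mu\big(g^{-1}B\big)=\det(g')^{\beta}\det(g)^{-\beta}\,\mu\big(g^{-1}B\big).
\]
Multiplying by $\det(g')^{-\beta}$ produces $\det(g')^{-\beta}\mu((g')^{-1}B)=\det(g)^{-\beta}\mu(g^{-1}B)$, so the two local definitions coincide. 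Freeness of the action guarantees that each translation map is a homeomorphism of $Y$ onto the clopen set $gY$, so that the pushforward of $\mu$ on each translate is itself a Radon measure.

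It remains to assemble the local data into a single Radon measure and to verify the global scaling law. Enumerate $G=\{g_1,g_2,\dots\}$ and disjointify, setting $Y_1=g_1Y$ and $Y_k=g_kY\setminus\bigcup_{j<k}g_jY$, so that $GY=\bigsqcup_k Y_k$ is a countable disjoint union of Borel sets; then define $\tilde\mu(B)=\sum_k \det(g_k)^{-\beta}\mu\big(g_k^{-1}(B\cap Y_k)\big)$ for Borel $B\subset GY$. The overlap consistency established above shows this value is independent of the chosen enumeration, and countable additivity of $\tilde\mu$ follows from that of $\mu$ on $Y$ together with Tonelli for nonnegative double series. For local finiteness, any compact $K\subset GY$ is covered by finitely many of the open sets $g_iY$, and on each piece $Y\cap g_i^{-1}K$ is closed in the compact set $g_i^{-1}K$, hence a compact subset of $Y$, so $\mu(Y\cap g_i^{-1}K)<\infty$ and therefore $\tilde\mu(K)<\infty$; inner and outer regularity transfer through the homeomorphisms $g_i$ and the finite subcovers. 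Finally the scaling condition \eqref{muscale} for $\tilde\mu$ on all of $GY$ reduces, by countable additivity, to the case $B\subset gY$, where for $g_0\in G$ the defining formula and the multiplicativity $\det(g_0g)=\det(g_0)\det(g)$ give $\tilde\mu(g_0B)=\det(g_0g)^{-\beta}\mu(g^{-1}B)=\det(g_0)^{-\beta}\tilde\mu(B)$. Uniqueness follows since, as already noted, any scaling-invariant extension must agree with $\tilde\mu$ on each $gY$ and hence, by countable additivity, everywhere.
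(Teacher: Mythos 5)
The paper does not actually prove this lemma: it is quoted from \cite{LLN}, Lemma 2.2, and used as a black box, so there is no in-paper argument to compare against. Your proof is correct and is the standard argument one would expect behind that citation: the scaling condition forces the value $\det(g)^{-\beta}\mu(g^{-1}B)$ on each translate $gY$ (which gives uniqueness), the consistency on an overlap $gY\cap g'Y$ is exactly the scaling condition \eqref{muscale} applied to $h=(g')^{-1}g$ and the set $g^{-1}B\subset Y$, and the disjointification over the countable group $G=\Gl^+(\Q)$ assembles the local data into a countably additive measure; your checks of local finiteness on compacta and of the global scaling law are also fine. One small inaccuracy worth flagging: freeness of the action is not what makes each $g$ act as a homeomorphism carrying $Y$ to a clopen set (that holds for any continuous action of a group), and your consistency argument never actually invokes freeness --- the full-strength scaling hypothesis on $Y$ is doing all the work there. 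Freeness is needed elsewhere, for the correspondence between KMS states and scaling measures, not for this extension step.
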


Back to our specific case of the $\Gl$-Connes-Marcolli system, where $X=\PP\times\M(\A)$ and $Y=\PP\times\M(\Zh)$, the group $G=\Gl^+(\Q)$ does not act on $X$ freely, but it acts on the space $X-(\PP\times\{0\})$ freely. Once we have a Radon measure $\mu$ on $X$ satisfying the scaling condition \eqref{muscale} for any Borel set $B\subset X$, the same diagonal matrix argument (see the proof of Lemma \ref{noKMS}) shows $\mu(\PP\times\{0\})=0$. Then if we note that $GY=X$ and $G\big(Y-(\PP\times\{0\})\big)=G\big(X-(\PP\times\{0\})\big)$, by applying the Lemma \ref{ext} above, every Radon measure on $Y$ satisfying the scaling condition \eqref{muscale} can be uniquely extended to a Radon measure on $X$, which is also satisfies the scaling condition \eqref{muscale}.

With the discussion above, we can restate Theorem \ref{thmKMS} in the following way.
\begin{thmbis}{thmKMS}For the $\Gl$-Connes-Marcolli system, when $\beta\neq0$, there is a one-to-one correspondence between KMS$_{\beta}$-states over $\mathcal{A}$ and Radon measures $\mu$ on $\PP\times\M(\A)$ such that $\mu$ satisfies the scaling condition \eqref{muscale} and induces a probability measure $\nu$ on $\Gamma\backslash Y$.\end{thmbis}

So from now on, we can just study the measures on $X=\PP\times\M(\A)$. This is more convenient, because $X$ admits a well defined $G$-action, instead of having only a partially defined action.

\begin{lem}\label{Key}(Key lemma) Let $p$ be a fixed prime number and $\mu_p$ be a $\Gamma$-invariant measure on the space $\PP\times\M(\Q_p)$, which induces a probability measure on $\Gamma\backslash\PP\times\M(\Zh)$, such that $\mu_p(\PP\times\{0\})=0$ and $\mu_p(gB)=\det(g)^{-\beta}\mu_p(B)$, where $g\in\M(\Z)$ and in addition $\det(g)=p^l$, and $B$ is a Borel set.

Then, for $\beta\neq0, 1, 2, \ldots, n-1$, we have that $\PP\times \Gl(\Q_p)$ is a subset of full measure.
\end{lem}
\begin{proof} First, we need to show that, for any nonzero singular $A\in\M(\Q_p)$, there are matrices $B\in\Sl(\Z_p)$ and $C\in\Gl(\Z_p)$ such that $$BAC=\begin{pmatrix}0\\&\ddots\\&&0\\&&&p^{k_1}\\&&&&\ddots\\&&&&&p^{k_l}\end{pmatrix}$$ where $0<l<n$ and $k_{i-1}\leq k_i$. Moreover, the expression on the right hand side is unique.

For any $A=(a_{ij})_{n\times n}$, we see that each entry is the form of $a_{ij}=ap^k$ where $a$ is invertible in $\Z_p$ or $0$. So there is a smallest $p^k$ such that $ap^k$ is nonzero. Then by using the elementary matrices of exchanging rows and columns and of changing signs, we can make the smallest $ap^k$ to be the entry at $(1,1)$. For simplicity purposes, we still call the result matrix $A$.

If there is another entry element in the first column, for example, we can assume the entry $a_{21}=bp^l$ in $A=(a_{ij})$. Remember that now $a_{11}=ap^k$, if we multiply the elementary matrix $E_{21}=(e_{ij})$, in which $e_{ii}=1$, $e_{21}=-a^{-1}bp^{l-k}$ and $e_{ij}=0$ for any other entries, to the left of $A$, it makes the entry $a_{21}$ $0$. Notice that $\det(E_{21})=1$ and because of the minimality of $ap^k$, $l-k\geq0$, so $-a^{-1}bp^{l-k}\in\Z_p$. Hence $E_{21}\in\Sl(\Z_p)$. By doing this, we make all the remaining entries in the first column $0$. Similarly, by multiplying elementary matrices on the right, we use $ap^k$ to kill other nonzero entries in the first row.
By multiplying by the diagonal matrix $\mathrm{diag}(a^{-1},1,\cdots,1)$ on the right, we cancel out $a$. That is to say, we can find $B_1\in\Sl(\Z_p)$ and $C_1\in\Gl(\Z_p)$ such that $$B_1AC_1=\begin{pmatrix}p^k&0&\ldots&0\\0&a_{22}&\ldots&a_{2n}\\\vdots&\vdots&\ddots&\vdots\\0&a_{n2}&\ldots&a_{nn}\end{pmatrix}.$$

By iterating this procedure, finally we get a diagonal matrix whose entries are all either powers of $p$ or zero. Then, by changing rows, columns and signs again, we obtain a matrix in the desired form. The uniqueness is a direct application of the Cauchy-Binet formula.

By the previous argument, we have actually shown that the set of (nonzero) singular matrices in $\M(\Q_p)$ is a disjoint union of the sets $Z_{k_1\ldots k_l}$, where $$Z_{k_1\ldots k_l}=\Sl(\Z_p)\begin{pmatrix}0\\&\ddots\\&&0\\&&&p^{k_1}\\&&&&\ddots\\&&&&&p^{k_l}\end{pmatrix}\Gl(\Z_p),\quad 1\leq l\leq n-1.$$

Let $\mathscr{B}_\Gamma$ be the $\sigma$-field of $\Gamma$-invariant Borel sets in $\M(\Q_p)$. We now define a measure on $\M(\Q_p)$ by $v(B)=\mu_p(\Gamma\backslash\PP\times B)$ for $B\in\mathscr{B}_\Gamma$. To show $\PP\times \Gl(\Q_p)$ is of full measure is the same as showing that its complement has measure 0. We have shown that the complement set is covered by $\PP\times Z_{k_1\ldots k_l}$'s and that the set of $Z_{k_1\ldots k_l}$'s is countable, so we only need to show that each $\mu_p(\PP\times Z_{k_1\ldots k_l})=0$. Namely, it is enough to show that these $Z_{k_1\ldots k_l}$ have $v$-measure zero.

To do this we need to give a description of the right cosets of $$\Sl(\Z)\backslash\Sl(\Z)T_l\Sl(\Z),$$ where $T_l$ stands for the diagonal matrix $$ T_l= \mathrm{diag}(\overbrace{1,\ldots,1}^{n-l},\underbrace{p,\ldots,p}_l). $$ This is actually done in \cite{K}, V. 7. In \cite{K}, V. Proposition 7.2, it is shown that $\Gl(\Z)\backslash\Gl(\Z)T_l\Gl(\Z)$ has a set of representatives given by lower triangular matrices
$$\begin{pmatrix}p^{k_1}\\a_{ij}&\ddots\\&&p^{k_n}\end{pmatrix}$$ in which, $k_i=0$ or $1$ and $\sum k_i=l$, and
$0\leq a_{ij}<p^{k_j(1-k_i)}$.

We show that these are also representatives in $\Sl(\Z)\backslash\Sl(\Z)T_l\Sl(\Z)$. First notice that, for any $g\in\Gl(\Z)$, we have $\det(g)=\pm1$. If there are $g_1, g_2\in\Gl(\Z)$ such that $$g_1T_lg_2=\begin{pmatrix}p^{k_1}\\a_{ij}&\ddots\\&&p^{k_n}\end{pmatrix},$$ since $\det(T_1)$ and the determinant of the matrix on the right side are both positive, then
we have $\det(g_1)=\det(g_2)=\pm1$. If $\det(g_1)=\det(g_2)=-1$, we replace $g_1, g_2$ by $g_1F, Fg_2$ where $F=\mathrm{diag}(-1,1,\ldots,1)$. We see that, since $T_l$ is diagonal, $FT_lF=T_l$. Hence $$g_1FT_lFg_2=g_1T_lg_2.$$ However, this time we have $\det(g_1F)=\det(Fg_2)=1$. Namely $g_1F, Fg_2\in \Sl(\Z)$. Thus, the matrices $$\begin{pmatrix}p^{k_1}\\a_{ij}&\ddots\\&&p^{k_n}\end{pmatrix}$$
are representatives of $\Sl(\Z)\backslash\Sl(\Z)T_l\Sl(\Z)$ and they are not equivalent. Indeed, since $\Sl(\Z)$ is a
subgroup of $\Gl(\Z)$, if two matrices above are equivalent under $\Sl(\Z)$, then they will also be equivalent under $\Gl(\Z)$,
but from \cite{K}, V. Proposition 7.2 we already know those matrices are inequivalent representatives in $\Gl(\Z)\backslash\Gl(\Z)T_l\Gl(\Z)$.

To prove that all the $v(Z_{k_1\ldots k_l})=0$, we only need to show that $v(Z_{k_1,\ldots,k_{n-1}})=0$, then view the diagonal elements $0=p^{-\infty}$ in $Z_{k_1\ldots k_l}$ when $l<n-1$. 

Let $n_l=\#(\Sl(\Z)\backslash\Sl(\Z)T_l\Sl(\Z))$ and let $$E_l(x_1,x_2,\ldots,x_n)=\sum\limits_{1\leq i_1<i_2<\ldots<i_l\leq n} x_{i_1}x_{i_2}\cdots x_{i_l}$$ be the $l$-th elementary symmetric polynomial. Then we see that $$n_l=E_l(p,p^2,\ldots,p^n)/p^{l(l+1)/2},$$
by counting the representatives in the form above.

Let $f_{k_1,\ldots,k_{n-1}}=\mathbbm{1}_{Z_{k_1,\ldots,k_{n-1}}}$ be the characteristic function of the set $Z_{k_1,\ldots,k_{n-1}}$. By letting the representatives act on $Z_{k_1,\ldots,k_{n-1}}$, by the definition of the Hecke operater $T_{T_1}$ (see 3.1), we see that $$T_{T_1}f_{k_1,\ldots,k_{n-1}}=\frac{1}{n_1}(p^{n-1}f_{k_1,\ldots,k_{n-1}}+\triangle_1),$$ where $\triangle_1$ is some linear combination of other $f_{k_1,\ldots,k_{l}}$'s. By iterating this procedure, we have
\begin{eqnarray*}T_{T_1}f_{k_1,\ldots,k_{n-1}}&=&\frac{1}{n_1}(p^{n-1}f_{k_1,\ldots,k_{n-1}}+\triangle_1)\\
T_{T_2}f_{k_1,\ldots,k_{n-1}}&=&\frac{1}{n_2}(p^{n-2}\triangle_1+\triangle_2) \\
T_{T_3}f_{k_1,\ldots,k_{n-1}}&=&\frac{1}{n_3}(p^{n-3}\triangle_2+\triangle_3)\\&\cdots\\
T_{T_{n-1}}f_{k_1,\ldots,k_{n-1}}&=&\frac{1}{n_{n-1}}(p\triangle_{n-2}+f_{k_1-1,k_2-1,\ldots,k_{n-1}-1}).
\end{eqnarray*}
Then by using the integral formula $\int T_gfdv=\det(g)^\beta\int fdv$, we get
\begin{eqnarray*}n_1p^\beta v(Z_{k_1,\ldots,k_{n-1}})&=&p^{n-1}v(Z_{k_1,\ldots,k_{n-1}})+v(\triangle_1)\\
n_2p^{2\beta} v(Z_{k_1,\ldots,k_{n-1}})&=&p^{n-2}v(\triangle_1)+v(\triangle_2) \\
n_3p^{3\beta} v(Z_{k_1,\ldots,k_{n-1}})&=&p^{n-3}v(\triangle_2)+v(\triangle_3)\\&\cdots\\
n_{n-1}p^{(n-1)\beta} v(Z_{k_1,\ldots,k_{n-1}})&=&pv(\triangle_{n-2})+p^{n\beta}v(Z_{k_1,\ldots,k_{n-1}}),
\end{eqnarray*} where $v(\triangle_i)$ means the value of the integral.

We set $z=v(Z_{k_1,\ldots,k_{n-1}})$ and we cancel out all the $v(\triangle_i)$'s recursively. We then
have an equation
\begin{eqnarray*}n_{n-1}p^{(n-1)\beta}z&-&p^{n\beta}z-p\bigg(n_{n-2}p^{(n-2)\beta}z\\&-&p^2\big(n_{n-3}p^{(n-3)\beta}z-\ldots-p^{n-2}(n_1p^\beta z-p^{n-1}z)\big)\bigg)=0.\end{eqnarray*}
If $z\neq0$, we can divide out $z$ and let $x=p^\beta$. Then we get an equation of $x$:
\begin{eqnarray*}-x^n+n_{n-1}x^{n-1}&-&pn_{n-2}x^{n-2}+p\cdot p^2n_{n-3}x^{n-3}\\&-&\cdots+(\pm1)p\cdot p^2\cdots p^{n-2}n_1x\pm p\cdot p^2\cdots p^{n-1}=0.\end{eqnarray*}
Recall that the classical Vieta's formulas for polynomials states,
$$\prod_{i=1}^n(x-\lambda_i)=\Sigma_{i=1}^{n}\pm E_{n-i}(\lambda_1,\lambda_2,\ldots,\lambda_n)x^i.$$
In our polynomial above, the coefficient in front of $x^l$ is $pp^2\dots p^{n-1-l}n_l$ with a possible positive or negative sign as in the alternative sum. By the definition of $n_l$, we have
\begin{eqnarray*}pp^2\dots p^{n-1-l}n_l&=&p^{(n-1-l)(n-l)/2}E_l(p,p^2,\cdots,p^n)/p^{l(l+1)/2}\\&=&p^{(n-1-l)(n-l)/2}\big(\sum\limits_{1\leq i_1<\dots<i_{n-l}\leq n} p^{n(n+1)/2}/p^{i_1+i_2+\ldots+i_{n-l}}\big)/p^{l(l+1)/2}\\&=&\sum\limits_{1\leq i_1<\dots<i_{n-l}\leq n}
p^{n(n+1)/2+(n-1-l)(n-l)/2-l(l+1)/2-(i_1+i_2+\ldots+i_{n-l})}\\&=&\sum\limits_{1\leq i_1<\dots<i_{n-l}\leq n}p^{n(n-l)-(i_1+i_2+\ldots+i_{n-l})}
\\&=&\sum\limits_{1\leq i_1<\dots<i_{n-l}\leq n}p^{(n-i_1)+(n-i_2)+\ldots+(n-i_{n-i})}\\&=&E_{n-l}(1,p,\dots,p^{n-1}).\end{eqnarray*}

So according to the Vieta's formula, we have $p^\beta=x=1, p,\ldots, \mathrm{or}\,p^{n-1}$. However, this is ruled out by the
assumption that $\beta\neq0, 1, 2, \ldots, n-1$ in the lemma. Thus we must have $z=0$. Whence $v(Z_{k_1,\ldots,k_{n-1}})=0$, we have all $v(Z_{k_1,\ldots,k_l})=0$. Then $\mu_p(\PP\times\{g\in\M(\Q_p)|\det(g)=0\})=v(\{g\in\M(\Q_p)|\det(g)=0\})=0$. Thus,
the set $\PP\times \Gl(\Q_p)$ is a subset of full measure with respect to $\mu_p$.

\end{proof}

From the lemma, we can also show the following.
\begin{cor}\label{Keycor} Let $\mu$ be a measure on $\PP\times\M(\A)$, which induces a probability measure on $\Gamma\backslash\PP\times\M(\hat{\Z})$, such that $\mu(gB)=\det(g)^{-\beta}\mu(B)$ for $g\in\Gl^+(\Q)$ and any Borel set $B$ of $\PP\times\M(\A)$. Then the set $$\PP\times\M'(\A)=\{(x,(m_p))\in\PP\times\M(\A)\mid m_p\in\Gl(\Q_p)\}$$ has full measure.
\end{cor}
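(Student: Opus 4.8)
The plan is to reduce the adelic statement to the single-prime Key Lemma~\ref{Key}, treating one prime at a time and then assembling the primes by a countable union. Write $\M(\A)=\prod_p'\M(\Q_p)$ as the restricted product with respect to the compact open subrings $\M(\Z_q)$, and for each prime $p$ put $S_p=\M(\Q_p)\setminus\Gl(\Q_p)$ for the singular locus at $p$. A point $(m_q)$ fails to lie in $\M'(\A)$ exactly when $m_p\in S_p$ for at least one $p$, so the complement of $\PP\times\M'(\A)$ is the countable union $\bigcup_p A^{(p)}$, where $A^{(p)}=\PP\times\{(m_q)\in\M(\A)\mid m_p\in S_p\}$. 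It therefore suffices to show each $A^{(p)}$ is $\mu$-null, since a countable union of null sets is null.

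Fix $p$. First I would manufacture a measure to which Lemma~\ref{Key} applies, by marginalizing the other primes onto their integral parts: for a Borel set $B\subseteq\PP\times\M(\Q_p)$ set $\tilde B=\{(x,(m_q))\mid (x,m_p)\in B,\ m_q\in\M(\Z_q)\ \forall q\neq p\}$ and define $\mu_p(B)=\mu(\tilde B)$. I then have to check that $\mu_p$ satisfies the hypotheses of Lemma~\ref{Key}. Invariance and the probability normalization pass down because $\Gamma=\Sl(\Z)\subset\Gl(\Z_q)$ for every $q$, so the diagonal $\Gamma$-action preserves the slice $\prod_{q\neq p}\M(\Z_q)$; concretely, $\mu_p$ restricted to $\PP\times\M(\Z_p)$ is the pushforward of $\mu|_Y$ under the $\Gamma$-equivariant projection $Y\to\PP\times\M(\Z_p)$, and pushing the probability measure $\nu$ forward to the quotient again gives a probability measure. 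The vanishing $\mu_p(\PP\times\{0\})=0$ comes from the scalar-matrix argument of Lemma~\ref{noKMS}: the central matrix $pI$ fixes the slice $\PP\times\{0\}$ setwise while scaling its (finite) induced measure by $p^{-n\beta}\neq 1$. The key point is the scaling condition for $g\in\M(\Z)$ with $\det(g)=p^l$: because $p^l$ is a unit in $\Z_q$ for all $q\neq p$, such a $g$ lies in $\Gl(\Z_q)$ and hence $g\,\M(\Z_q)=\M(\Z_q)$, so left multiplication by $g$ leaves the defining constraints $m_q\in\M(\Z_q)$ untouched; this gives $g\tilde B=\widetilde{gB}$ and therefore $\mu_p(gB)=\mu(g\tilde B)=\det(g)^{-\beta}\mu(\tilde B)=\det(g)^{-\beta}\mu_p(B)$ directly from the scaling condition for $\mu$.

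Now Lemma~\ref{Key} applies — this is the only place the hypothesis $\beta\notin\{0,1,\ldots,n-1\}$ is used — and yields that $\PP\times\Gl(\Q_p)$ has full $\mu_p$-measure, i.e. $\mu(A_0)=0$ for the integral slice $A_0=\widetilde{\PP\times S_p}=\{(x,(m_q))\mid m_p\in S_p,\ m_q\in\M(\Z_q)\ \forall q\neq p\}$. The remaining work is to upgrade this from the integral slice $A_0$ to the full set $A^{(p)}$, and here I would again exploit homogeneity together with the restricted-product structure. Any $(x,(m_q))\in A^{(p)}$ is integral at all but finitely many primes; choosing a positive integer $N$ coprime to $p$ and divisible by sufficiently high powers of those finitely many primes $q\neq p$ at which $(m_q)$ is not integral, the scalar $NI$ simultaneously clears all denominators away from $p$, while being a $p$-adic unit it preserves singularity of the $p$-component. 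Hence $NI\cdot(x,(m_q))\in A_0$, so $A^{(p)}\subseteq\bigcup_N (NI)^{-1}A_0$ over the countably many such $N$, and the scaling condition gives $\mu\big((NI)^{-1}A_0\big)=N^{n\beta}\mu(A_0)=0$; thus $\mu(A^{(p)})=0$. Summing over $p$ proves the complement of $\PP\times\M'(\A)$ is null. The main obstacle is precisely this passage from one prime to the adeles: Lemma~\ref{Key} only controls the $p$-component after the others are pinned to their integral parts, so the two delicate points are (i) that marginalization preserves the scaling condition — which hinges on integer matrices of $p$-power determinant being $\Gl(\Z_q)$-units away from $p$ — and (ii) the bootstrapping step, where ``integral at almost all primes'' is exactly what allows a single coprime scalar to sweep every non-integral slice into the already-controlled set.
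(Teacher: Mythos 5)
Your proof is correct and follows essentially the same route as the paper: restrict $\mu$ to the $p$-th component by pinning the other places to $\M(\Z_q)$, check the hypotheses of Lemma \ref{Key} (the key point being that integer matrices of $p$-power determinant lie in $\Gl(\Z_q)$ for $q\neq p$, so the slice is preserved), and then propagate the resulting null set from the integral slice to all of $\PP\times\M(\A)$ via the scaling condition before taking a countable union over primes. Your bootstrapping step with the scalars $NI$ is simply a more explicit rendering of the paper's terse appeal to $\Gl^+(\Q)\M(\Zh)=\M(\A)$.
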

\begin{proof}
For any $\Gamma$-invariant Borel set $B\subset \M(\Q_p)$, the measure $\mu$ restricts to $\Gamma\backslash\PP\times B\times\prod_{q\neq p}\M(\Z_q)$ and gives a measure $v_p$ on $\M(\Q_p)$. Notice that the matrix $g=\mathrm{diag}(p,\ldots,p)$ is invertible in $\M(\Z_q)$, for $q\neq p$. Thus,  $$g(\PP\times\{0\}\times\prod_{q\neq p}\M(\Z_q))=\PP\times\{0\}\times\prod_{q\neq p}\M(\Z_q).$$
Since $\det(g)\neq1$ and the set $\PP\times\{0\}\times\prod_{q\neq p}\M(\Z_q)$ is $\Gamma$-invariant, the scaling condition of the measure implies that $$\mu(\Gamma\backslash\PP\times\{0\}\times\prod_{q\neq p}\M(\Z_q))=0.$$
One then has $v_p(\M(\Z_p))=\mu(\Gamma\backslash\PP\times M(\hat{\Z}))=1$. Thus, $v_p$ is a measure that satisfies all the conditions of the auxiliary measure $v$ constructed in the previous lemma. Thus, the set $$\{m\in\M(\Q_p)\mid m\not\in\Gl(\Q_p)\}$$ has zero $v_p$-measure and by the definition of $v_p$ and the scaling condition on $\mu$ together with the fact $\Gl^+(\Q)(\M(\Zh))=\M(\A)$ (see in the proof of proposition \ref{Gibbsprop})
$$\mu(\{(x,(m_q))\in\PP\times\M(\A)|m_p\not\in\Gl(\Q_p)\})=0.$$
If we set $$Z_p=\{(x,(m_q))\in\PP\times\M(\A)|m_p\not\in\Gl(\Q_p)\},$$ then $\PP\times\M'(\A)$ is
the complement of the union of all the $Z_p$'s
in $\PP\times\M(\A)$.
\end{proof}

Now let us discuss the nonexistence of the KMS states. In order to do this, we need the following definition (also see Definition 2.8 of \cite{LLN}).

\begin{defn} Let $\beta\in\R$ and let $S$ be a semigroup such that $\Gamma\subset S\subset G$. We define $$\zeta(\Gamma,S,\beta)=\sum\limits_{g\in\Gamma\backslash S}\det(g)^{-\beta}.$$
The datum $(\Gamma,S,\beta)$ is {\em summable} if $\zeta(\Gamma,S,\beta)<\infty$.
\end{defn}

Recall also that
a lower triangular integral matrix $R=(r_{ij})_{n\times n}$ is called {\em reduced} if $0\leq r_{ij}\leq r_{jj}$, $i\geq j$.

\begin{prop}(\cite{A} Lemma 3.2.7 and Exercise 3.2.10)
The following identity of sets holds: $$\Sl(\Z)\backslash\{M\in\M(\Z)\mid\,\det(M)=l\}=\{R\in\M(\Z)\mid\, R\ \ \mathrm{reduced}, \det(R)=l\}$$ for any positive integer $l$.
\end{prop}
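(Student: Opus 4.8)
The plan is to recognize the right-hand set as the Hermite normal forms of the determinant-$l$ integer matrices, and to reduce the $\Sl(\Z)$-statement to the classical Hermite theory for $\Gl(\Z)$ by a determinant count. The key preliminary observation is that on the set $\{M\in\M(\Z):\det M=l\}$ the left $\Sl(\Z)$- and $\Gl(\Z)$-orbits coincide: if $M'=UM$ with $U\in\Gl(\Z)$ and $\det M'=\det M=l$, then $\det U=1$, so $U\in\Sl(\Z)$, while the reverse inclusion is trivial. Since $l>0$, a reduced matrix $R$ automatically has positive diagonal entries (their product is $\det R=l>0$, and none may vanish), so $R$ itself lies in this set. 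It therefore suffices to prove that every such orbit contains exactly one reduced lower-triangular matrix.

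For existence I would run the Hermite reduction by integral row operations, i.e.\ by left multiplication. First bring $M$ to lower-triangular form, then reduce each sub-diagonal entry $r_{ij}$ (for $i>j$) modulo the pivot $r_{jj}$ into the range $0\le r_{ij}<r_{jj}$ by subtracting an integer multiple of row $j$; processing the columns from the rightmost to the leftmost guarantees that entries already reduced in later columns are left untouched, since subtracting row $j$ only alters columns $\le j$. Diagonal signs are normalized by negating rows, and one need not track these signs by hand: the net transformation $U$ sending $M$ to the final $R$ satisfies $\det U=\det R/\det M=l/l=1$ and hence lies in $\Sl(\Z)$. This is exactly where the determinant count removes the distinction between $\Gl(\Z)$ and $\Sl(\Z)$; it may also be seen directly by pairing sign changes with the matrix $\mathrm{diag}(-1,1,\ldots,1)$, as in the proof of the Key Lemma (Lemma~\ref{Key}).

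For uniqueness, suppose $R_1,R_2$ are reduced and $R_2=UR_1$ with $U\in\Sl(\Z)$. Then $U=R_2R_1^{-1}$ is lower-triangular, being a product of lower-triangular matrices, and integral with $\det U=1$; its diagonal entries are the positive integers $r^{(2)}_{ii}/r^{(1)}_{ii}$ whose product is $\det U=1$, hence all equal $1$, so the diagonals of $R_1$ and $R_2$ agree and $U$ is unipotent lower-triangular. Writing out the $(i,j)$ entry of $R_2=UR_1$ for $i>j$ and inducting on the distance $i-j$ below the diagonal, the off-diagonal entries $U_{ij}$ satisfy $r^{(2)}_{ij}-r^{(1)}_{ij}=U_{ij}\,r^{(1)}_{jj}$; since both reduced entries lie in $[0,r^{(1)}_{jj})$, their difference lies in $(-r^{(1)}_{jj},r^{(1)}_{jj})$, forcing $U_{ij}=0$. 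Thus $U=I$ and $R_1=R_2$.

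The step I expect to be most delicate is this final comparison, together with the precise reading of the ``reduced'' condition: the induction uses the strict bound $0\le r_{ij}<r_{jj}$, so the non-strict inequality in the definition should be understood in that sense, since an entry equal to $r_{jj}$ can always be further reduced to $0$; otherwise the representative fails to be unique. Everything else is a transcription of the standard Hermite normal form argument combined with the determinant observation, which is precisely the content of \cite{A}, Lemma 3.2.7 and Exercise 3.2.10.
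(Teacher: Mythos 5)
Your proof is correct. Note that the paper does not actually prove this proposition --- it is quoted with a citation to Andrianov (Lemma 3.2.7 and Exercise 3.2.10) and no argument is given --- so there is no in-paper proof to compare against; what you have supplied is precisely the standard argument behind the cited result, namely existence and uniqueness of the lower-triangular Hermite normal form under left multiplication, combined with the determinant count $\det U=\det R/\det M=l/l=1$ that collapses the distinction between $\Gl(\Z)$- and $\Sl(\Z)$-orbits on the determinant-$l$ stratum when $l>0$. All three steps check out: the orbit identification, the right-to-left order of the modular reduction (subtracting a multiple of row $j$ from row $i$ only touches columns $\leq j$ of a lower-triangular matrix, so previously reduced columns survive), and the uniqueness induction showing that a lower-triangular unimodular $U$ carrying one reduced matrix to another must be unipotent and then the identity. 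Your closing caveat is also a genuine, if minor, correction to the paper: as literally written, the condition $0\leq r_{ij}\leq r_{jj}$ does not give unique representatives (an entry equal to $r_{jj}$ can be pushed to $0$ by one more row operation), so ``reduced'' must be read with the strict bound $0\leq r_{ij}<r_{jj}$ for $i>j$; this is the convention in Andrianov and is the one the paper implicitly uses in the proof of Lemma~\ref{sumS}, where a pivot $p^{l}$ is said to leave exactly $p^{l}$ choices for each entry below it.
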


With the help of the proposition above, we can calculate $\zeta(\Gamma,S,\beta)$ for some special choices of the semigroup $S$. Let $$\M(p)=\{M\in\M(\Z)\mid \det(M)=p^l\}$$ for some prime $p$.

\begin{lem}\label{sumS}
The datum $(\Gamma,\M(p),\beta)$ is summable only if $\beta>n-1$. In this range, the sum is given by
\begin{equation}\label{zetaMp}
\zeta(\Gamma,\M(p),\beta)=\dfrac{1}{(1-p^{-\beta+n-1})(1-p^{-\beta+n-2})\cdots(1-p^{-\beta})}.
\end{equation}
For $\beta>n$ we have
\begin{equation}\label{zetaMZ}
\zeta(\Gamma,\M^+(\Z),\beta)=\zeta(\beta-n+1)\zeta(\beta-n+2)\cdots\zeta(\beta),
\end{equation}
where $\zeta(s)$ is the Riemann zeta-function.
\end{lem}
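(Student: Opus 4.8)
The plan is to use the Proposition just stated to replace each orbit sum by a sum over Hermite (reduced) representatives, after which both generating functions factor completely into one-variable series. Concretely, since $\det\gamma=1$ for every $\gamma\in\Gamma$, the function $\det$ descends to $\Gamma\backslash S$, and by the Proposition each coset in $\Gamma\backslash\{M:\det M=l\}$ contains exactly one reduced matrix $R$. Thus I would rewrite $\zeta(\Gamma,\M(p),\beta)$ (resp.\ $\zeta(\Gamma,\M^+(\Z),\beta)$) as a sum of $\det(R)^{-\beta}$ over reduced matrices $R$ whose determinant is a power of $p$ (resp.\ an arbitrary positive integer).

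The combinatorial heart is to count reduced matrices with a prescribed diagonal. A reduced $R$ is lower triangular with positive diagonal entries $r_{11},\dots,r_{nn}$, so $\det(R)=r_{11}\cdots r_{nn}$; for each column $j$ the $n-j$ subdiagonal entries range independently over a complete residue system modulo $r_{jj}$, giving $r_{jj}$ choices apiece. Hence the number of reduced matrices with diagonal $(r_{11},\dots,r_{nn})$ equals $\prod_{j=1}^n r_{jj}^{\,n-j}$.

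For $\M(p)$ the diagonal entries are themselves powers of $p$, say $r_{jj}=p^{a_j}$ with $a_j\ge0$, and the sum factors as
\begin{equation*}
\zeta(\Gamma,\M(p),\beta)=\sum_{a_1,\dots,a_n\ge0}\prod_{j=1}^{n}p^{a_j(n-j)-\beta a_j}=\prod_{j=1}^{n}\sum_{a\ge0}p^{a(n-j-\beta)}.
\end{equation*}
Each factor is a geometric series converging exactly when $n-j-\beta<0$; the binding constraint is $j=1$, so the datum is summable iff $\beta>n-1$, and in that range the product equals $\prod_{j=1}^n(1-p^{n-j-\beta})^{-1}$, which is \eqref{zetaMp} after setting $k=n-j$. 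Similarly, for $\M^+(\Z)$ the diagonal entries range over all positive integers, and writing $\det(R)^{-\beta}=\prod_j r_{jj}^{-\beta}$ the same count gives
\begin{equation*}
\zeta(\Gamma,\M^+(\Z),\beta)=\prod_{j=1}^{n}\sum_{r\ge1}r^{(n-j)-\beta}=\prod_{j=1}^{n}\zeta(\beta-n+j),
\end{equation*}
which is \eqref{zetaMZ}; each Dirichlet factor converges iff $\beta-n+j>1$, the binding case $j=1$ forcing $\beta>n$.

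Each step is routine once the representative count is established; the single point demanding care---and the main obstacle---is that count $\prod_j r_{jj}^{\,n-j}$, namely correctly identifying the number $n-j$ of free subdiagonal slots in column $j$ and checking that the resulting multi-index sum genuinely factors as a product of one-variable series. Matching the convergence thresholds then amounts to the observation that the largest exponent $n-j$ occurs at $j=1$ and equals $n-1$, which pins down the summability ranges $\beta>n-1$ and $\beta>n$ respectively.
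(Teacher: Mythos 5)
Your proposal is correct and follows essentially the same route as the paper: reduce to Hermite (reduced) coset representatives, count $r_{jj}^{\,n-j}$ subdiagonal choices in column $j$, and factor the resulting positive multi-index sum into one-variable geometric (resp.\ Dirichlet) series, reading off the convergence threshold from the $j=1$ factor. The only cosmetic difference is that for $\M^+(\Z)$ the paper assembles the answer from the local factors via the Euler product, whereas you sum the Dirichlet series over the diagonal entries directly; these are the same computation.
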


\proof
If the entry-$(j,j)$ of a reduced matrix is $p^l$, then the element in the $j$-th column under $p^l$ only has $p^l$ choices. So the $j$-th column totally has $p^{l(n-j)}$ different cases if the entry-$(j,j)$ is $p^l$. This gives us a way of computing
\begin{eqnarray*}\zeta(\Gamma,\M(p),\beta)&=&\sum\limits_{k_1,k_2,\dots,k_n=0}^\infty p^{-\beta(k_1+k_2+\dots+k_n)}p^{k_1(n-1)}p^{k_2(n-2)}\cdots p^{k_{n-1}}\\&=&\sum\limits_{k_1,k_2,\dots,k_n=0}^\infty p^{k_1\big((n-1)-\beta\big)}p^{k_2\big((n-2)-\beta\big)}\cdots p^{k_{n-1}(1-\beta)}p^{-\beta k_n}.\end{eqnarray*}
Then sum converges only if $\beta>n-1$. When $\beta>n-1$, we can sum it up and we obtain \eqref{zetaMp}.
By the Euler product formula of the Riemann zeta function, $\zeta(s)=\prod\limits_p\frac{1}{1-p^{-s}}$, for $\mathfrak{Re}(s)>1$ and by the same counting method, when if $\beta>n$ we obtain \eqref{zetaMZ}.
\endproof

\begin{prop}\label{PolarProp}\emph{(Polarization formula: \cite{LLN}, Lemma 2.9)}. Let $(G,\Gamma)$ be a Hecke pair. Also let $\mu$ be the measure on $Y$ satisfying the scaling condition \eqref{muscale}
for some $\beta$.
Let $\nu$ be the induced measure on $\Gamma\backslash Y$. If there is a semigroup $S$
such that $\Gamma\subset S\subset G$ and $(\Gamma,S,\beta)$ is summable, and there is a $\Gamma$ invariant subset $Y_0\subset Y$ such that $gY_0\cap Y_0=\emptyset$ if $g\in G-\Gamma$ and the set $SY_0$ is conull with respect to $\mu$, then for any $S$-invariant function $f\in L^2(\Gamma\backslash Y,\,d\nu)$, we have
\begin{equation}\label{Polar1}\int_{\Gamma\backslash Y}fd\nu=\zeta(\Gamma,S,\beta)\int_{\Gamma\backslash Y_0}fd\nu.\end{equation}
As a consequence, if $P$ is the projection operator from $L^2(\Gamma\backslash Y)$ to its subspaces of $S$-invariant functions, then the following projection formula holds,
\begin{equation}\label{Polar2}Pf=T_S(f),\end{equation}
where $T_S$ is the Hecke operator given by the formula
$$T_S(f)(x)=\frac{1}{\zeta(\Gamma,S,\beta)}\sum\limits_{g\in\Gamma\backslash S/\Gamma}\det(s)^{-\beta}\#(\Gamma/(g^{-1}\Gamma g\cap\Gamma))T_gf(x).$$
\end{prop}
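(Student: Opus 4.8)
The plan is to follow the strategy of \cite{LLN}, exploiting the hypothesis that $SY_0$ is conull to decompose $\Gamma\backslash Y$ into translates of $Y_0$ and then transport integrals back to $Y_0$ via the scaling condition \eqref{muscale}. The first observation is that the image of $gY_0$ in $\Gamma\backslash Y$ depends only on the double coset $\Gamma g\Gamma$: left multiplication by $\Gamma$ is absorbed when passing to $\Gamma\backslash Y$, and right multiplication by $\Gamma$ is absorbed because $Y_0$ is $\Gamma$-invariant. I would first check, using the separation hypothesis $gY_0\cap Y_0=\emptyset$ for $g\in G-\Gamma$, that distinct double cosets yield \emph{disjoint} images in $\Gamma\backslash Y$: if $\gamma_1 g_1 y_1=\gamma_2 g_2 y_2$ with $y_i\in Y_0$, then $g_2^{-1}\gamma_2^{-1}\gamma_1 g_1$ carries a point of $Y_0$ into $Y_0$, forcing it into $\Gamma$ and hence $\Gamma g_1\Gamma=\Gamma g_2\Gamma$. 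Together with $SY_0$ being conull this produces a decomposition $\Gamma\backslash Y=\bigsqcup_{\Gamma g\Gamma\in\Gamma\backslash S/\Gamma}\Gamma\backslash\Gamma g\Gamma Y_0$ modulo $\nu$-null sets.

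Next I would upgrade the set-measure identity \eqref{eqmes} to an identity for integrals of an $S$-invariant $f$. Since $f$ is constant along the $S$-action, on the translate $gY_0$ one has $f(gy)=f(y)$, so the change of variables $y\mapsto gy$ together with the scaling condition \eqref{muscale} (which contributes the factor $\det(g)^{-\beta}$), while accounting for the $\#(\Gamma/(g^{-1}\Gamma g\cap\Gamma))$ left cosets making up the double coset $\Gamma g\Gamma$, yields
\[
\int_{\Gamma\backslash\Gamma g\Gamma Y_0}f\,d\nu=\det(g)^{-\beta}\,\#(\Gamma/(g^{-1}\Gamma g\cap\Gamma))\int_{\Gamma\backslash Y_0}f\,d\nu.
\]
Summing this over the double cosets and using that $\det$ is $\Gamma$-bi-invariant gives $\sum_{\Gamma g\Gamma}\det(g)^{-\beta}\#(\Gamma/(g^{-1}\Gamma g\cap\Gamma))=\sum_{g\in\Gamma\backslash S}\det(g)^{-\beta}=\zeta(\Gamma,S,\beta)$, which is exactly \eqref{Polar1}. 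The summability hypothesis $\zeta(\Gamma,S,\beta)<\infty$ is what guarantees the interchange of the (infinite) sum with integration and that the right-hand side is finite.

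For the projection formula \eqref{Polar2} I would verify the three defining properties of the orthogonal projection $P$ onto the $S$-invariant subspace. First, $T_S f$ is $S$-invariant for every $f$, since averaging over the double-coset Hecke operators $T_g$ produces a function annihilated by the $S$-action. Second, $T_S f=f$ whenever $f$ is already $S$-invariant: here each $T_g f=f$ and the normalization by $\zeta(\Gamma,S,\beta)$ collapses the weighted sum to $f$. Third, and this is where the name \emph{polarization} enters, I would apply \eqref{Polar1} to a product of two $S$-invariant functions (equivalently, polarize the quadratic form $\int|f+h|^2\,d\nu$) to obtain $\langle f,h\rangle_{\Gamma\backslash Y}=\zeta(\Gamma,S,\beta)\,\langle f,h\rangle_{\Gamma\backslash Y_0}$, from which the self-adjointness of $T_S$ on $L^2(\Gamma\backslash Y)$ follows. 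An idempotent, self-adjoint operator whose range is precisely the $S$-invariant functions is the orthogonal projection $P$, giving \eqref{Polar2}.

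The main obstacle is the rigorous bookkeeping in the first two steps: pinning down the precise meaning of $S$-invariance (invariance under the $S$-action on the free space $X=\PP\times\M(\A)$ extending $Y$, as arranged in the discussion preceding Lemma \ref{Key}), justifying that the decomposition holds up to $\nu$-null sets rather than exactly, and legitimately exchanging the infinite sum over $\Gamma\backslash S/\Gamma$ with integration. Once \eqref{Polar1} is established, the projection statement \eqref{Polar2} is a formal consequence, so the analytic heart of the argument is the polarization identity itself, which rests on the summability of $(\Gamma,S,\beta)$ and the scaling condition \eqref{muscale}.
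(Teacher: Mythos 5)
The paper offers no proof of this proposition: it is imported verbatim from [LLN], Lemma 2.9, so there is no internal argument to compare against. Your derivation of \eqref{Polar1} is sound and is the standard one: the separation hypothesis $gY_0\cap Y_0=\emptyset$ for $g\in G-\Gamma$ makes the images of the translates $\Gamma g\Gamma Y_0$ in $\Gamma\backslash Y$ pairwise disjoint, conullity of $SY_0$ makes them exhaust $\Gamma\backslash Y$ up to a $\nu$-null set, and the integral form of \eqref{eqmes} for $S$-invariant $f$ together with $\sum_{\Gamma g\Gamma}\det(g)^{-\beta}\#(\Gamma/(g^{-1}\Gamma g\cap\Gamma))=\sum_{g\in\Gamma\backslash S}\det(g)^{-\beta}$ gives the identity; summability and $L^2\subset L^1$ (since $\nu$ is finite) legitimate the interchange of sum and integral.

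The argument for \eqref{Polar2} has a genuine gap at its first step: $T_Sf$ is \emph{not} $S$-invariant for general $f$, so $T_S$ is not an idempotent whose range is the invariant subspace, and the ``self-adjoint idempotent, hence orthogonal projection'' route collapses. Unwinding the definition gives $T_Sf(x)=\zeta(\Gamma,S,\beta)^{-1}\sum_{h\in\Gamma\backslash S}\det(h)^{-\beta}f(hx)$, so for $s\in S$ the value $T_Sf(sx)$ is a weighted sum over $\Gamma\backslash Ss$, a \emph{proper} subset of $\Gamma\backslash S$ because $S$ is only a semigroup; the two sums differ for generic $f$. A one-dimensional test case makes this concrete: take $G=\Q^*_+$, $\Gamma=\{1\}$, $S=\mathbb{N}$, $Y=\Zh$, $Y_0=\Zh^*$, $f=\mathbbm{1}_{\Zh^*}$; then $T_Sf=\zeta(\beta)^{-1}\mathbbm{1}_{\Zh^*}$, which is supported on $Y_0$ and certainly not $S$-invariant, whereas $Pf=\zeta(\beta)^{-1}\mathbbm{1}$. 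The formula \eqref{Polar2} must therefore be read as in [LLN]: $Pf$ is the unique $S$-invariant function agreeing with $T_Sf$ on $Y_0$, extended by $S$-invariance along the decomposition $SY_0=\bigsqcup_{\Gamma g\Gamma}\Gamma g\Gamma Y_0$. The correct verification is then to show $\langle f-Pf,k\rangle=0$ for every $S$-invariant $k$, which follows from \eqref{Polar1} applied to $S$-invariant products together with a Fubini computation over that decomposition; your polarization identity $\langle f,k\rangle_{\Gamma\backslash Y}=\zeta(\Gamma,S,\beta)\langle f,k\rangle_{\Gamma\backslash Y_0}$ is correct but only concerns pairs of invariant functions and does not yield self-adjointness of $T_S$ on all of $L^2(\Gamma\backslash Y)$ (which, as the example shows, fails).
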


When $\beta\neq0,1,\cdots,n-1$, for some prime $p$ we take $$Y_p=\PP\times\Gl(\Z_p)\times\prod\limits_{q\neq p}\M(\Z_q).$$
Lemma \ref{Key} implies this set $Y_p$ has the correct properties of the set $Y_0$ described in Proposition \ref{PolarProp}
with respect to the semigroup $S=\M(p)$.

If $J$ is a finite set of primes, say $J=\{p_1,p_2,\dots,p_l\}$, $l<\infty$, we let $\M(J)$ be the set $\{M\in\M(\Z)\,\mid\,\det(M)=p_1^{a_1}p_2^{a_2}\dots p_l^{a_l}\}$. Whence we set $$Y_J=\PP\times\prod\limits_{p\in J}\Gl(\Z_p)\times\prod\limits_{q\notin J}\M(\Z_q). $$
Corollary \ref{Keycor} shows the set $Y_J$ has the correct properties of the set $Y_0$ described in Proposition \ref{PolarProp} for the semigroup $S=\M(J)$.

By the correspondence of the KMS$ _\beta$ states and the Radon measures on $Y$ with the scaling condition \eqref{muscale}, if we take $f$ as some positive constant function and let $\nu$ be the induced probability measure on $\Gamma\backslash Y$, formula \eqref{Polar1} shows $$1=\nu(\Gamma\backslash Y)=\zeta(\Gamma,\M(p),\beta)\nu(\Gamma\backslash Y_p).$$ However, this makes sense only if $(\Gamma,\M(p),\beta)$ is summable. Thus, by the discussion above, we can summarize the result as the following statement.

\begin{prop}For the $\Gl$-Connes-Marcolli system, when $\beta\neq0,1,\ldots,n-1$, the KMS$ _{\beta}$ states exist only if $\beta>n-1$.\end{prop}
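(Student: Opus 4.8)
The plan is to argue by contradiction: assuming a KMS$_\beta$ state exists for some $\beta$ with $\beta\le n-1$ and $\beta\notin\{0,1,\ldots,n-1\}$, I will show that the associated scaling measure forces the datum $(\Gamma,\M(p),\beta)$ to be summable, which by Lemma~\ref{sumS} is impossible in this range. By the restated Theorem~\ref{thmKMS}, such a state corresponds to a Radon measure $\mu$ on $\PP\times\M(\A)$ satisfying the scaling condition \eqref{muscale} and inducing a probability measure $\nu$ on $\Gamma\backslash Y$ with $\nu(\Gamma\backslash Y)=1$.

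First I would fix a prime $p$, set $S=\M(p)$, and take
$$Y_p=\PP\times\Gl(\Z_p)\times\prod_{q\neq p}\M(\Z_q).$$
By the Key Lemma~\ref{Key}, the set $\PP\times\Gl(\Q_p)$ is of full measure, so $SY_p$ is conull and $Y_p$ plays the role of the subset $Y_0$ in Proposition~\ref{PolarProp}: the disjointness $gY_p\cap Y_p=\emptyset$ for $g\in G-\Gamma$ holds because a matrix carrying one $p$-integrally invertible point to another must lie in $\Gamma$.

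The heart of the argument is the combinatorial bookkeeping, which I would run as an identity of extended nonnegative reals so as not to presuppose convergence. Up to a $\nu$-null set, $\Gamma\backslash Y$ decomposes as the disjoint union $\bigsqcup_{\Gamma g\Gamma}\Gamma\backslash\Gamma g\Gamma Y_p$ over double cosets $\Gamma g\Gamma\subset\Gamma\backslash\M(p)/\Gamma$. Applying the scaling identity \eqref{eqmes} to each piece gives $\nu(\Gamma\backslash\Gamma g\Gamma Y_p)=\det(g)^{-\beta}\#(\Gamma/(g^{-1}\Gamma g\cap\Gamma))\,\nu(\Gamma\backslash Y_p)$, and summing---using $\#(\Gamma/(g^{-1}\Gamma g\cap\Gamma))=\#(\Gamma\backslash\Gamma g\Gamma)$ to convert the double-coset sum of $\det(g)^{-\beta}\#(\cdots)$ into the left-coset sum defining $\zeta$---yields
$$1=\nu(\Gamma\backslash Y)=\zeta(\Gamma,\M(p),\beta)\,\nu(\Gamma\backslash Y_p).$$
Equivalently, one applies formula \eqref{Polar1} with $f$ the constant function $1$.

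To close, I would observe that $\nu(\Gamma\backslash Y_p)$ cannot vanish: if it did, every term above would vanish and $\nu(\Gamma\backslash Y)=0$, contradicting $\nu(\Gamma\backslash Y)=1$. Hence $\zeta(\Gamma,\M(p),\beta)=\nu(\Gamma\backslash Y_p)^{-1}<\infty$, so $(\Gamma,\M(p),\beta)$ is summable, and Lemma~\ref{sumS} forces $\beta>n-1$, contradicting the assumed range. The main obstacle is not the final arithmetic but securing the geometric input: that the double-coset decomposition is a genuine partition conull in $\Gamma\backslash Y$. This rests entirely on the Key Lemma~\ref{Key} (and Corollary~\ref{Keycor}) guaranteeing that the singular locus $\PP\times\{g:\det g=0\}$ is $\mu$-null precisely when $\beta\notin\{0,1,\ldots,n-1\}$; this is exactly why that integrality restriction on $\beta$ is imposed in the statement.
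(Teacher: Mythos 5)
Your proof is correct and follows essentially the same route as the paper: take $Y_p$ as the set $Y_0$ of Proposition~\ref{PolarProp} (justified by Lemma~\ref{Key}), derive $1=\zeta(\Gamma,\M(p),\beta)\,\nu(\Gamma\backslash Y_p)$ from the polarization formula \eqref{Polar1} applied to the constant function, and invoke Lemma~\ref{sumS} to force $\beta>n-1$. Your explicit treatment of the degenerate case $\nu(\Gamma\backslash Y_p)=0$ via the term-by-term double-coset sum is a welcome refinement of the paper's terser ``this makes sense only if summable,'' but it is the same argument.
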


To prove our main theorem we also need the following theorem.
\begin{thm}\emph{(Real approximation theorem, \cite{M})}. Let $G$ be a connected algebraic group over $\Q$. Then $G(\Q)$ is dense in $G(\R)$, where $G(\R)$ is the real Lie group. In particular, $\Gl(\Q)$ is dense in $\Gl(\R)$.\end{thm}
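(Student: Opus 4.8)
The plan is to separate the elementary case actually used in this paper, $G=\Gl$, from the general statement.

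First I would dispose of $G=\Gl$ directly, as this is all that is needed below. View $\Gl(\R)$ as the subset $\{g\in\M(\R)\mid\det(g)\neq0\}$ of $\M(\R)\cong\R^{n^2}$, which is open because $\det$ is continuous. The rational matrices $\M(\Q)=\Q^{n^2}$ are dense in $\M(\R)$ coordinatewise. Given $g\in\Gl(\R)$ and a neighbourhood $V$ of $g$, shrink $V$ so that $V\subset\Gl(\R)$; this is possible since $\Gl(\R)$ is open. Choosing any rational matrix $q\in V\cap\M(\Q)$, we have $q\in\Gl(\R)$ with rational entries, so $\det(q)\in\Q$ and $\det(q)\neq0$, whence $q\in\Gl(\Q)$. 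Thus every neighbourhood of $g$ meets $\Gl(\Q)$, which gives the density.

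For a general connected \emph{linear} algebraic group $G$ over $\Q$ I would argue as follows. The engine is Borel's theorem that a connected linear algebraic group over a field of characteristic zero is unirational over that field; this produces a dominant rational map $\phi\colon\mathbb{A}^N\dashrightarrow G$ defined over $\Q$ on a dense open $U\subset\mathbb{A}^N$. In characteristic zero $\phi$ is generically smooth, so after shrinking $U$ the induced map $\phi\colon U(\R)\to G(\R)$ is a submersion in the real topology, hence open. Since $U(\R)$ is open in $\R^N$ and $\Q^N$ is dense in $\R^N$, the set $U(\Q)$ is dense in $U(\R)$; picking $u_0\in U(\Q)$ and using openness of $\phi$, the set $\phi(U(\Q))\subset G(\Q)$ is dense in an open neighbourhood of $\phi(u_0)$ in $G(\R)$. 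Translating by $\phi(u_0)^{-1}\in G(\Q)$ shows that $G(\Q)$ is dense in some neighbourhood of the identity. Letting $H=\overline{G(\Q)}$ denote the closure in the real topology, $H$ is a closed subgroup of the Lie group $G(\R)$ (the closure of a subgroup is a subgroup), and since it contains a neighbourhood of $e$ it contains the whole identity component $G(\R)^{+}$.

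The hard part is the final step: one must show that $G(\Q)$ meets every connected component of $G(\R)$, equivalently that $G(\Q)\hookrightarrow G(\R)\to\pi_0(G(\R))$ is surjective, for only then does $H\supseteq G(\R)^{+}$ improve to $H=G(\R)$. For $\Gl$ this is transparent (the two components $\det>0$ and $\det<0$ each carry an obvious rational point), and similarly for $G=\mathbb{G}_m$. In general I would treat the component group by the usual dévissage: reduce to the reductive case and there to tori and to semisimple simply connected groups, using for the latter that $G(\R)$ is connected so that no nontrivial components arise. This reduction, not the neighbourhood-of-identity argument, is where the real content lies; it is precisely this general statement that is quoted from \cite{M}.
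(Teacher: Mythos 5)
The paper offers no proof of this statement at all: it is imported verbatim as a citation to \cite{M}, so there is nothing internal to compare your argument against. Your treatment of the case the paper actually uses, $G=\Gl$, is complete and correct: $\Gl(\R)$ is open in $\M(\R)\cong\R^{n^2}$, rational matrices are dense there, and a rational matrix with nonzero determinant lies in $\Gl(\Q)$; this elementary observation already covers every invocation of the theorem in the paper (including the density of the subgroups $G_J$ in $\Gl^+(\R)$, which follows by the same open-set argument). Your general sketch is the standard route (Borel's unirationality theorem, generic smoothness, openness of the resulting map on real points, hence $\overline{G(\Q)}\supseteq G(\R)^{+}$), and you correctly isolate the genuinely nontrivial remaining step, namely surjectivity of $G(\Q)\to\pi_0(G(\R))$ via the d\'evissage to tori and simply connected semisimple groups. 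That last step is only named, not executed, so as a standalone proof of the full quoted theorem your text is incomplete --- but since the paper itself supplies no proof and relies on the reference for exactly this content, flagging it rather than proving it is a reasonable division of labour. One genuine improvement you make over the paper's formulation: the statement as printed says ``connected algebraic group,'' which is false without the word \emph{linear} (an elliptic curve of Mordell--Weil rank zero has $G(\Q)$ finite, hence not dense in $G(\R)$); your insertion of ``linear'' restores the correct hypothesis, which is the one Milne actually uses.
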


Now we want to show the uniqueness of the KMS state when $n-1<\beta\leq n$. Because of the one-to-one correspondence between the KMS states and the Borel measures satisfying the conditions in Theorem~\ref{thmKMS}, we only need to show the uniqueness of the measure. To show the uniqueness of the measure, a standard method is to use an argument based on ergodicity (for a full discussion, we refer the readers to \cite{LLN}, Proof of Theorem 4.2). This main idea in this type of argument is that the measures with the desired properties in Theorem~\ref{thmKMS}
form a (Choquet) simplex and the ones with ergodic action are the vertices. If the group action with respect to every measure is ergodic, then the simplex must be made of just one point. We also need to mention a common technique used in proving ergodicity: a group $G$ acts on a probability space $(X,\,\mu)$ ergodically if and only if the subspace of all $G$-invariant functions in $L^2(X)$ consists of constant functions.

\begin{thm}\label{main}
Let $n-1<\beta\leq n$. Let $\mu$ be a Borel measure on $\PP\times\M(\A)$ and $\nu$ be the corresponding measure on $\Gamma\backslash\PP\times\M(\A)$ induced by $\mu$, such that $\mu$ satisfies the scaling condition \eqref{muscale} with respect to $\beta$ for any Borel set $B$ and $\nu$ is a probability measure.
The action of $\Gl^+(\Q)$ on $\PP\times\M(\A)$ is ergodic with respect to the measure $\mu$.
\end{thm}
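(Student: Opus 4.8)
The plan is to verify the functional criterion for ergodicity, namely that every bounded $\Gl^+(\Q)$-invariant Borel function $f$ on $\PP\times\M(\A)$ is $\mu$-essentially constant. By Corollary~\ref{Keycor} the locus $\PP\times\M'(\A)$, on which each local component is invertible, is conull and the $\Gl^+(\Q)$-action on it is free, so I would work entirely there. It is worth emphasizing at the outset what makes the range $n-1<\beta\le n$ delicate: applying the polarization identity \eqref{Polar1} to the constant function $1$ with $S=\M(J)$ gives $\nu(\Gamma\backslash Y_J)=\zeta(\Gamma,\M(J),\beta)^{-1}$, and since $\zeta(\Gamma,\M(J),\beta)=\prod_{p\in J}\zeta(\Gamma,\M(p),\beta)\to\prod_{j=0}^{n-1}\zeta(\beta-j)$, which \emph{diverges} for $\beta\le n$ (the factor $\zeta(\beta-n+1)$ blows up, cf. \eqref{zetaMZ}), the set $\PP\times\Gl(\Zh)$ and hence the full invertible-adele locus $\PP\times\Gl(\A)$ are $\mu$-null. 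Thus the measure genuinely concentrates away from the invertible adeles, and the naive idea of transporting everything to $\PP\times\Gl(\Zh)$ via the class-number-one identity $\Gl(\A)=\Gl^+(\Q)\cdot\Gl(\Zh)$ is unavailable; this is precisely the feature that distinguishes the present range from $\beta>n$.

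Instead, I would exploit that $\Gl^+(\Q)$ is generated by $\Sl(\Q)$ together with the rational dilations $g_p=\mathrm{diag}(p,1,\dots,1)$, and prove invariance of $f$ under each of these two classes of elements forces constancy. First I would disintegrate $\mu$ along the determinant of the finite part; the scaling condition \eqref{muscale} relates the fibers over different determinant values by the explicit factor $\det(g)^{-\beta}$, so that, after normalization, the analysis reduces to a \emph{probability} measure on which a mixing argument can be run. The engine for the $\Sl(\Q)$-direction is Moore's ergodicity theorem (Howe--Moore mixing): by strong approximation $\Sl(\Q)$ is dense in $\Sl(\A)$, and together with the Real Approximation Theorem its diagonal image is dense enough that the induced action on the relevant $\Gamma\backslash\big(\PP\times(\text{profinite fiber})\big)$ admits no nonconstant invariant $L^2$-functions. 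This would show $f$ is constant along $\Sl(\Q)$-orbits.

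It then remains to handle the dilation directions, and here the scaling condition \eqref{muscale} does the work directly: a function already $\Sl(\Q)$-invariant and also invariant under each $g_p$ descends to the quotient by the determinant action, where \eqref{muscale} forces the push-forward measure to be rigid, leaving no room for nonconstant invariants. Combining the two steps, every bounded $\Gl^+(\Q)$-invariant $f$ is constant $\mu$-a.e., which is the asserted ergodicity.

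The main obstacle is the disintegration-and-identification step of the second paragraph. Because $\mu$ is not a homogeneous (Haar) measure and, in this range, is supported off the invertible adeles, one cannot simply quote Howe--Moore: one must first construct an explicit measure isomorphism of $(\PP\times\M'(\A),\mu)$ with a homogeneous model carrying an $\Sl$-invariant probability measure in each determinant fiber, and only then invoke mixing. The endpoint $\beta=n$ is the most delicate point, since there the global datum $(\Gamma,\M^+(\Z),\beta)$ fails to be summable only logarithmically, through the pole of $\zeta$ at $1$, so all quantitative estimates degenerate and the argument must be carried out purely qualitatively.
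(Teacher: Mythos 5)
Your structural observations are sound --- $\PP\times\M'(\A)$ is conull by Corollary \ref{Keycor}, the set $\PP\times\Gl(\A)$ is indeed $\mu$-null for $\beta\le n$ because $\zeta(\beta-n+1)$ diverges there, and $\Gl^+(\Q)$ is generated by $\Sl(\Q)$ together with the dilations $\mathrm{diag}(p,1,\dots,1)$ --- but the two steps that carry the actual content of the theorem are left as assertions, and one of them cannot be repaired in the form you state it. First, Moore/Howe--Moore ergodicity requires a homogeneous space carrying an invariant finite measure; here $\mu$ is an \emph{arbitrary} Radon measure subject only to the scaling condition \eqref{muscale}, and identifying it with a homogeneous model is essentially equivalent to the uniqueness one is trying to prove. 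You acknowledge this (``one must first construct an explicit measure isomorphism\dots'') but do not construct it, and a priori a scaling measure on $\PP\times\M'(\A)$ need not disintegrate into $\Sl$-invariant probability measures on homogeneous determinant fibers. The paper avoids this circularity entirely: it never invokes mixing, but works with the Hecke projections $P_J$ of Proposition \ref{PolarProp}, whose validity needs only the scaling condition plus summability of $(\Gamma,\M(J),\beta)$ for finite $J$ (true for all $\beta>n-1$), combined with the elementary facts that $\Gl^+(\Q)$-invariant continuous functions on $\Gamma\backslash\PP$ are constant (real approximation) and that $\Gamma$-invariant functions on $\prod_{p\in J}\Gl(\Z_p)$ factor through the determinant (density of $\Gamma$ in $\Sl(\Zh)$).

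Second, and more seriously, your treatment of the dilation (determinant) direction --- ``the scaling condition forces the push-forward measure to be rigid, leaving no room for nonconstant invariants'' --- is exactly the point where the hypothesis $\beta\le n$ must enter, and your argument never uses it there. For $\beta>n$ the action is \emph{not} ergodic for a general scaling measure (the paper exhibits a family of mutually singular extremal measures indexed by $\Gamma\backslash\PP\times\Gl(\Zh)$), yet every sentence of your dilation step applies verbatim in that range; your only invocation of $\beta\le n$ is the nullity of $\PP\times\Gl(\A)$, which by itself does not preclude nonconstant invariants. What actually kills them in the paper is the computation \eqref{char}: a $\Gamma$-invariant function on $\prod_{p\in J}\Gl(\Z_p)$ has the form $\chi(\det(\cdot))$ for a character $\chi$ of $\prod_{p\in J}\Z_p^*$, and the projection $P_F$ multiplies it by
$$\prod_{q\in F}\frac{(1-q^{-\beta})(1-q^{-(\beta-1)})\cdots(1-q^{-(\beta-n+1)})}{(1-\chi(q)q^{-\beta})(1-\chi(q)q^{-(\beta-1)})\cdots(1-\chi(q)q^{-(\beta-n+1)})},$$
which tends to $0$ as $F$ grows precisely because $\prod_q(1-q^{-(\beta-n+1)})^{-1}=\zeta(\beta-n+1)$ diverges for $\beta\le n$ while the corresponding $L$-series for nontrivial $\chi$ does not. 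Some quantitative Dirichlet-series input of this kind is unavoidable; without it the proposal establishes nothing that distinguishes $(n-1,n]$ from $(n,\infty)$.
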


\begin{proof} We use the same strategy as in \cite{LLN}. The proof will be done in two steps.

First, we show that $\Gl^+(\Q)$ acts on $(\PP\times(\M(\A)/\Gl(\Zh)), \mu)$ ergodically. Here and in the following, the quotients always should be interpreted from the measure-theoretic point of view. The action of $\Gl(\Zh)$ on $\PP\times(\M(\A)$ from the right by multiplying to the factor $\M(\A)$ is compatible with the left action of $\Gl^+(\Q)$. So to say the $\Gl^+(\Q)$-action on $(\PP\times(\M(\A)/\Gl(\Zh)), \mu)$ is ergodic
is the same as to say that the $\Gl(\Zh)$-action on the measure theoretic quotient space $(\PP\times\M(\A)/\Gl^+(\Q),\mu)$ is ergodic.

As a second step, since $\Gl(\Zh)$ is compact, the measure $\mu$ is supported only in one orbit (see \cite{Z}, corollary 2.1.13), so we can think of $\PP\times\M(\A)/\Gl^+(\Q)$ as a measure theoretic quotient space $\Gl(\Zh)/H$, for some subgroup $H$ of $\Gl(\Zh)$. Noticing that $\PP$ is actually a group, if we define the $\PP$ action on $\PP\times\M(\A)$ as a multiplier on $\PP$ from the right, then this action is compatible with the diagonal action of $\Gl^+(\Q)$ on $\PP\times\M(\A)$ from the left. The group $\PP$ acts on the quotient space $\PP\times\M(\A)/\Gl^+(\Q)$ hence on $\Gl(\Zh)/H$. The $\PP$-action on $\Gl(\Zh)/H$ is continuous and $\Gl(\Zh)$ is totally disconnected, so it is a trivial action.
If we can show that the action of $\PP$ is ergodic, then we get that all the $\Gl^+(\Q)$-invariant functions on $\PP\times\M(\A)$ are constant. Thus the $\Gl^+(\Q)$ action is ergodic.

\smallskip

\noindent {\em (1) $\Gl^+(\Q)$ acts on $(\PP\times(\M(\A)/\Gl(\Zh)), \mu)$ ergodically}.

Before showing this, we observe the fact that any continuous $\M^+(\Z)$-invariant function $f$ on $\Gamma\backslash\PP$ is constant. Thus, we can view $f$ as a $\Gamma$-invariant function on $\PP$. This is true, because for any $\Gamma$-invariant function $f$ on $\PP$ that is also $\M^+(\Z)$ invariant, we have the following facts. First, for any $g\in\Gl^+(\Q)$, there is a $k\in\M^+(\Z)$ making $kg\in\M^+(\Z)$, hence $f(gx)=f(kgx)=f(x)$. Namely, $f$ is $\Gl^+(\Q)$-invariant. Second, $\Gl(\Q)$ is dense in $\Gl(\R)$ by the real approximation theorem. It follows that $f$ is also $\Gl^+(\R)$-invariant, i.e., $f$ is constant on $\Gamma\backslash\PP$.

Let $J$ be some finite set of primes. For any bounded Borel function $f$ over $\Gamma\backslash\PP$, we define $f_J=f\mathbbm{1}_{Y_J}$ on $\PP\times\M(\Zh)$, so that $f_J(x,\rho)=f(x)\mathbbm{1}_{Y_J}(x,\rho)$, for $(x,\,\rho)\in\PP\times\M(\Zh)$. We also need two operators. Consider the Hilbert space $L^2(\Gamma\backslash(\PP\times\M(\Zh)),v)$, where $v$ is the probability measure on $\Gamma\backslash(\PP\times\M(\Zh))$ induced by $\mu$. Let $P_J$ be the projection to the subspace of $\M(J)$-invariant functions, while $P$ is the projection to the subspace of $\M^+(\Z)$-invariant functions.

For another finite set $F$ of primes disjoint from $J$, by the projection formula, we have
\begin{equation}\label{projeq}P_Ff_J=(T_Ff)_J,\end{equation}
where $T_F=T_{\M(F)}$ with respect to the action of $\Gl^+(\Q)$ on $\Gamma\backslash\PP$. Notice that $P=\lim_IP_I$, where $I$ runs through all the finite set of primes. Then $P$ factor through the space of $G_J$-invariant functions over $\Gamma\backslash\PP$ and from equation \eqref{projeq} we see that $Pf_J$ is in the form of $Pg_J$ in which $g$ is some $G_J$-invariant function. Where $$G_J=\{g\in\Gl^+(\Q)\mid\, \det(g)=p_1^{l_1}p_2^{l_2}\cdots p_k^{l_k},\,p_i\notin J\}$$ is a dense subgroup of $\Gl^+(\Q)$. Hence $G_J$ is also dense in $\Gl^+(\R)$. Thus, $g=a$ is a constant. We then have $$Pf_J=aP\mathbbm{1}_{\Gamma\backslash Y_J}$$ by the definition of the restriction operator $f \mapsto f_J$. Since $PP_J=P$ and $P_J\mathbbm{1}_{\Gamma\backslash Y_J}$ is a constant by the projection formula \eqref{Polar1}, we obtain that $$Pf_J=aP\mathbbm{1}_{\Gamma\backslash Y_J}=aPP_J\mathbbm{1}_{\Gamma\backslash Y_J}$$ is a constant.

We always regard a $\prod_{p\in J}\Gl(\Z_p)$-right-invariant function $f$ on the quotient $$\Gamma\backslash(\PP\times\prod_{p\in J}\M(\Z_p))$$ as a $\Gl(\Zh)$-right-invariant function on the quotient $\Gamma\backslash(\PP\times\M(\Zh))$.
Moreover, all such functions for all $J$'s are dense in the space of $\Gl(\Zh)$-right-invariant functions. So we can simply restrict our observation to $Pf$. To calculate $Pf$, we need the operator $P_J$ again and the fact that $PP_J=P$. From the formula \eqref{Polar2}, we see $P_J$ is compatible with the right $\prod_{p\in J}\Gl(\Z_p)$ action (seen as $\Gl(\Zh)$ action). So $P_Jf$ is still $\Gl(\Zh)$ invariant. Since $\M(J)Y_J$ is of full measure, we can think $P_Jf$ is supported in $\M(J)Y_J$. Recall the construction of $Y_J$. The middle part of $Y_J$ is exactly the group $\prod_{p\in J}\Gl(\Z_p)$. The property of $\Gl(\Zh)$ invariance implies that $P_Jf$ is only determined by the factor in $\PP$. Namely $P_Jf$ is actually in the form of $g_J$ for some $g$ on $\Gamma\backslash\PP$. So $Pf=PP_Jf=Pg_J$. By the previous paragraph, $Pg_J$ is a constant, so is $Pf$. This shows all the $\Gl^+(\Q)$ invariant functions on $$(\PP\times(\M(\A)/\Gl(\Zh)), \mu)$$ are constants. So the ergodicity follows.

\medskip

\noindent {\em (2) The group $\PP$ acts on the quotient space $\PP\times\M(\A)/\Gl^+(\Q)$ ergodically}.

Recall the group $\PP$ acts on $\PP\times\M(\A)$ at the first factor from the right. So the $\PP$ action and the $\Gl^+(\Q)$ action are compatible. We show that the action of $\Gl^+(\Q)$ on $\PP\times\M(\A)/\PP=\M(\A)$ is ergodic.

Let $P$ be the projection from the Hilbert space $L^2(\M(\Zh))$ to the subspace of $\M^+(\Z)$ invariant functions. It is enough to show that $Pf$ is a constant function for $f\in L^2(\M(\Zh))$.

For any $J$, we need to show that all the $\Gl^+(\Q)$ invariant functions on the product $\prod_J\M(\Q_p)$ are constant. Since $\prod_J\Gl(\Q_p)$ is dense in $\prod_J\M(\Q_p)$, we show that $\Gl^+(\Q)$ invariant functions on $\prod_J\Gl(\Q_p)$ are constant. This is equivalent to the fact that $\M^+(\Z)$ invariant functions on $\prod_J\Gl(\Z_p)$ are constant. Once this is true for a $J$, we can vary all the possible $J$'s. Because that all the such functions for all $J$'s span a dense subspace of the space of $\M^+(\Z)$ invariant functions, it follows that all the $\M^+(\Z)$ invariant functions are constant.

To find $Pf$, we notice that $\Gamma\subset\M^+(\Z)$, so we always can map $f$ to the subspace of $\Gamma$ invariant functions first. We therefore assume that $f$ is $\Gamma$ invariant. We define
$$f_J=f\mathbbm{1}_{\prod_J\Gl(\Z_p)},\quad \mathrm{such~that}\quad f_J((m_p)_p)=f((m_p)_p)\mathbbm{1}_{\prod_J\Gl(\Z_p)}((m_p)_p),$$
on $\M(\Zh)$, for any $\Gamma$ invariant function $f$ over $\prod_J\Gl(\Z_p)$. The group $\Gamma$ is dense in $\Sl(\Zh)$, hence $f$ only depends on the value of $\det(m)\in\prod_J\Z^*_p$. Thus, it is of the form $f(m)=\chi(\det(m))$, where $\chi$ is a character in the dual group of $\prod_J\Z^*_p$. Thus, if $\chi$ is trivial, then $f_J=\mathbbm{1}_{Y_J}$. So $Pf_J$ is a constant as we showed in step (1). When $\chi$ is not trivial, we use the projection formula \eqref{Polar2}, and we find
\begin{equation}\label{char}
\begin{split}
P_Ff_J(m)=\chi&(\det(m_p)_{p\in J})\\&\times\prod\limits_{q\in F}\frac{(1-p^{-\beta})(1-p^{-(\beta-1)})\cdots(1-p^{-(\beta-n+1)})}{(1-\chi(p)p^{-\beta})(1-\chi(p)p^{-(\beta-1)})\cdots(1-\chi(p)p^{-(\beta-n+1)})},
\end{split}
\end{equation}
where $F$ is a set of finite primes that is disjoint from $J$.\\

We see that $P=\lim_FP_F$ and $PP_F=P$. If we keep enlarging $F$, the right hand side of \eqref{char} is approaching to $0$, by the properties of Dirichlet series. So $Pf_J=0$. In both cases, we find that $Pf_J$ is constant. Then, by varying all the possible $J$'s, we can see that all the $Pf$'s are constant functions.

\end{proof}

\subsection{The existence of the KMS states} In the previous section, we have shown there is no KMS states when $\beta\in(-\infty, 0)\cup(0,1)\cup(1,2)\cup\cdots\cup(n-1,n)$ and the uniqueness of the KMS state when $n-1<\beta\leq n$. In the section, we briefly discuss how to construct the KMS states and the existence at the dividing points of $\beta=0,1,2,\ldots,n-1$.

\smallskip
\subsubsection{The case $\beta=0$}

\begin{prop}\label{beta0lem}
There are no KMS states on the $\Gl$-system for $\beta=0$.
\end{prop}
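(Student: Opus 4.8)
The plan is to combine the measure-theoretic correspondence on the free part with a direct analysis of the fixed fiber $\PP\times\{0\}$. Since the correspondence of Theorem~\ref{thmKMS} is stated only for $\beta\neq0$, I would work with the exact sequence $0\to\mathcal{J}\to\mathcal{A}\to\mathcal{B}\to0$, where $\mathcal{J}=\mathrm{C}^*_r(\Gamma\backslash\Gl^+(\Q)\boxtimes_\Gamma(Y-(\PP\times\{0\})))$ is the ideal on which $\Gl^+(\Q)$ acts freely by Lemma~\ref{fixedpt}. Given a hypothetical KMS$_0$ state $\varphi$, its restriction $\varphi|_{\mathcal{J}}$ corresponds, via the free-action correspondence (which needs only freeness, not $\beta\neq0$), to a $\Gl^+(\Q)$-invariant Radon measure $\mu$ on $Y-(\PP\times\{0\})$ inducing a finite measure $\nu$ on the quotient; note the scaling condition \eqref{muscale} at $\beta=0$ is exactly $G$-invariance. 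The first step is to show $\mu=0$, so that $\varphi$ factors through $\mathcal{B}$; the second step is to rule out KMS$_0$ states on $\mathcal{B}$ itself.

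For the first step the key observation is that at $\beta=0$ the central elements act by measure-\emph{preserving} maps with infinite orbits off the zero section. Fix a prime $p$ and take $g=pI$, which is central with $\det(g)^{-\beta}=1$. Let $B_0=\{(x,(m_q))\in Y : m_p\in\M(\Z_p),\ m_p\notin p\M(\Z_p)\}$ be the $p$-primitive locus. The iterates $p^kI\cdot B_0$, $k\geq0$, are $\Gamma$-invariant, pairwise disjoint, contained in $Y$, and all of equal $\nu$-measure by \eqref{muscale}. Since $\bigsqcup_{k\geq0}p^kI\cdot B_0=\{m_p\neq0\}\cap Y$ lies inside a probability space, $\sum_{k\geq0}\nu(\Gamma\backslash B_0)\leq1$ forces $\nu(\Gamma\backslash B_0)=0$, whence $\nu(\{m_p\neq0\})=0$. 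As this holds for every $p$ and $\{m\neq0\}=\bigcup_p\{m_p\neq0\}$, we conclude $\nu(Y-(\PP\times\{0\}))=0$, i.e. $\mu=0$ and $\varphi|_{\mathcal{J}}=0$. This is precisely the phenomenon distinguishing $\beta=0$ from the other integer points: for $\beta>0$ the series $\sum_{k\geq0}\det(p^kI)^{-\beta}=\sum_{k\geq0}p^{-nk\beta}$ converges and imposes no such vanishing, which is why KMS states persist at $\beta=1,\dots,n-1$.

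It remains to show that $\mathcal{B}$ carries no KMS$_0$ state, and this is where I expect the main obstacle to lie. Lemma~\ref{noKMS} excludes only $\beta\neq0$, and on $\PP\times\{0\}$ the scalar isotropy of Lemma~\ref{fixedpt} makes the action non-free, so neither the measure correspondence nor formula \eqref{eqmes} applies. The plan is to exploit the real approximation theorem: a KMS$_0$ state on $\mathcal{B}$ should, after quotienting out the central isotropy, yield a $\Gl^+(\Q)$-invariant probability measure on $\Gamma\backslash\PP$, and since $\Gl^+(\Q)$ is dense in $\Gl^+(\R)$ this measure must be invariant under all of $\mathrm{PGL}_n^+(\R)$. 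I would then aim to derive a contradiction from the interplay of this continuous invariance with the scalar multipliers $u_{kI}$, for which $\sigma_t(u_{kI})=k^{nit}u_{kI}$: the required $\sigma_t$-invariance of the state forces $\varphi(u_{kI})=0$ for $k\neq1$, and the difficulty is to convert the tension between this vanishing and the homogeneity forced by real approximation into an outright contradiction. Showing that the non-free zero section cannot support an equilibrium state—even though it does support an invariant measure—is the crux, and the step I expect to require the most care.
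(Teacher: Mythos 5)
Your reduction to the ideal $\mathcal{J}$ and the argument that $\varphi|_{\mathcal{J}}=0$ are correct, and the primitive-locus decomposition $Y\cap\{m_p\neq 0\}=\bigsqcup_{k\geq 0}p^kI\cdot B_0$ into $\Gamma$-invariant, pairwise disjoint sets of equal $\nu$-measure (using that $p$ is a unit in $\Z_q$ for $q\neq p$ and that $\det(p^kI)^{-\beta}=1$ at $\beta=0$) is actually a cleaner route than the paper's: the paper first kills the nonzero singular strata $Z_{k_1\ldots k_l}$ by the same translation-and-disjointness trick, and then has to invoke Corollary \ref{Keycor} together with the divergence of $\zeta(\Gamma,\M(p),0)$ in the polarization formula to dispose of the invertible locus, whereas your single argument covers both loci at once.

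The gap is the one you yourself flag and then leave open: you never rule out a KMS$_0$ state that factors through $\mathcal{B}$, i.e.\ a $\sigma_t$-invariant trace supported on the zero section $\PP\times\{0\}$. This cannot be handled by any measure-theoretic scaling argument of the kind used so far, because the zero section genuinely does carry a $\Gl^+(\Q)$-invariant probability measure (normalized Haar measure on $\Gamma\backslash\PP$ times the Dirac mass at $0$; by real approximation every invariant measure there is of this form), so at $\beta=0$ the scaling condition imposes nothing on this fiber. The obstruction, if it exists, has to come from the reduced C*-norm: one must show that the tracial weight attached to this invariant measure does not extend to a state on $\mathrm{C}^*_r$, i.e.\ is not weakly contained in the regular representations $\pi_{(x,0)}$ on $\ell^2(\Gamma\backslash\Gl^+(\Q))$ --- a non-amenability statement about the Hecke pair $(\Gl^+(\Q),\Gamma)$, morally the spectral counterpart of the divergence of $\zeta(\Gamma,\M(p),\beta)$ at $\beta=0$. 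Your observation that $\sigma_t$-invariance forces $\varphi(u_{kI})=0$ for $k\neq 1$ is true but toothless, since many states annihilate these unitaries, and you give no mechanism for turning it into a contradiction. So the proposal is incomplete at exactly the point where the remaining content lies. (To be fair, you have put your finger on something real: the paper's own proof reaches ``$\PP\times\M'(\A)$ is of full measure'' by citing Corollary \ref{Keycor}, but the step of that corollary which kills $\PP\times\{0\}\times\prod_{q\neq p}\M(\Z_q)$ relies on $\det(g)^{-\beta}\neq 1$ and is vacuous at $\beta=0$, and Lemma \ref{noKMS} likewise excludes only $\beta\neq 0$; so the zero section is not fully disposed of there either.)
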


\proof
 In this case, the KMS states are tracial. If there is a trace, say $\varphi$, on $\mathcal{A}$, $\varphi$ gives a nonzero $\Gl^+(\Q)$ invariant measure on $\PP\times\M(\A)$. We can similarly define $$v(B)=\mu(\Gamma\backslash\PP\times B\times\prod_{q\neq p}\M(\Z_q))$$ for $B\subset\M(\Q_p)$ and $\Gamma$-invariant. So if there is a $Z_{k_1\ldots k_l}$ with $v(Z_{k_1\ldots k_l}\neq0)$, then $$v(Z_{k_1+1\ldots k_l+1})=v(pZ_{k_1\ldots k_l})\neq0.$$ So, $0\neq v(Z_{k_1\ldots k_l})=v(Z_{k_1+1\ldots k_l+1})=v(Z_{k_1+2\ldots k_l+2})=\cdots$. Notice all these $Z_{k_1\ldots k_l}$'s are disjoint in $\M(\Z_p)$ and
 $$v(\bigcup_{n\geq0}Z_{k_1+n\ldots k_l+n})=\sum_{n\geq0}v(Z_{k_1+n\ldots k_l+n})=\infty.$$ So this contradicts the fact that $v(\M(\Z_p))<\infty$, since $\M(\Z_p)$ is compact. Then we have $v(Z_{k_1\ldots k_l})=0$ for all $Z_{k_1\ldots k_l}$. Again, following Corollary \ref{Keycor}, $\PP\times\M'(\A)$ is of full measure. We again let $\nu$ be the measure induced by $\mu$ on $\Gamma\backslash Y$. By formulas \eqref{eqmes} and \eqref{Polar1}, we have $$\nu(\Gamma\backslash Y)=\zeta(\Gamma,\M(p),0)\nu(\Gamma\backslash Y_p).$$ This is a contradiction, because $\zeta(\Gamma,\M(p),0)=\infty$ but $\nu(\Gamma\backslash Y)$ and $\nu(\Gamma\backslash Y_p)$ are both positive finite numbers. So $\mu\equiv0$ and there is no KMS state on $\mathcal{A}$ for $\beta=0$.
 \endproof

\smallskip
\subsubsection{The cases $\beta=1,2,\ldots,n-1$}

First let $\mu$ be a Borel measure corresponding to some KMS$_\beta$ state as in Theorem \ref{thmKMS}$'$ and let $\nu$ be the induced probability measure on $\Gamma\backslash Y$.
Note that by formula \eqref{eqmes}, we have $$\nu(\Gamma\backslash\M(p)Y_p)=\zeta(\Gamma,\M(p),\beta)\nu(Y_p).$$ When $\beta=1,2,\ldots,n-1$, the series defining $\zeta(\Gamma,\M(p),\beta)$ is divergent. So $\nu(Y_p)=0$. Let $p$ run over all the primes, we conclude that the set $$\PP\times\{M=(m_p)_p\in\M(\A)\mid \det(m_p)=0\}$$ is a set of full measure. Moreover, as shown in Corollary \ref{Keycor}, the set $$\PP\times\{0\}\times\prod_{q\neq p}\M(\Z_q)$$ is always of $\mu$-measure $0$. So the set $\PP\times\M^{0,\beta}(\A)$, where $$\M^{0,\beta}(\A)=\{M=(m_p)_p\in\M(\A)\mid m_p\neq0,\,\det(m_p)=0\},$$ is of full measure with respect to $\mu$.

Moreover, actually there are a lot of such measures.
\begin{prop}\label{betaklem}
Let $\beta = k \in \{ 1, 2, \ldots, n-1 \}$ and let $\mu_k=\mu_\PP\times\mu'$, where $\mu_\PP$ is a $\Gl^+(\Q)$ invariant measure on $\PP$ and $\mu'$ is
the Haar measure on $\A^{kn}\simeq \M^k(\A)$. Here
$$\M^k(\A)=\{M\in\M(A)\mid M=(\mathbf{0}_{n-k}|N_k)\},$$ and $\mathbf{0}_{n-k}$ is the zero matrix of size $n\times(n-k)$ and $N_k\in\mathrm{Mat}_{n\times k}(\A)$. For $g\in\Gl(\Zh)$, the measure $\mu_{kg}=\mu_k(\,\cdot\, g^{-1})$
with support in $\PP\times\M^k(\A)g$ satisfies the scaling condition \eqref{muscale}, hence it defines a
KMS$_\beta$ state.
\end{prop}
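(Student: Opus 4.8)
The plan is to check the scaling condition \eqref{muscale} directly for $\mu_k$ with $\beta=k$, transport it to $\mu_{kg}$, and then read off a KMS$_\beta$ state from Theorem~\ref{thmKMS}$'$ after normalizing. First I would note that the diagonal left action of $h\in\Gl^+(\Q)$ preserves the support: for $M=(\mathbf{0}_{n-k}\,|\,N_k)\in\M^k(\A)$ one has $hM=(\mathbf{0}_{n-k}\,|\,hN_k)$, so $h$ fixes the vanishing columns and only moves the block $N_k\in\mathrm{Mat}_{n\times k}(\A)$. Since $\mu_k=\mu_\PP\times\mu'$ is a product and $\mu_\PP$ is $\Gl^+(\Q)$-invariant, the entire scaling factor must come from the action of $h$ on $\mu'$, so the verification reduces to computing how left multiplication by $h$ distorts the Haar measure on $\A^{kn}\simeq\M^k(\A)$.

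This Haar-measure computation, together with keeping track of the absolute values, is the step I expect to demand the most care, and it is precisely where the integer value $\beta=k$ gets singled out. Regarding $N_k$ as $k$ column vectors in $\A^{n}$, left multiplication by $h$ acts as the block-diagonal endomorphism $h\oplus\cdots\oplus h$ ($k$ copies) of $\A^{kn}$, of determinant $\det(h)^{k}$; such an automorphism scales the Haar measure by its module $\prod_p|\det(h)|_p^{\,k}$. Since $\det(h)\in\Q^{*}_{+}$, the product formula $|d|_\infty\prod_p|d|_p=1$ gives $\prod_p|\det(h)|_p=|\det(h)|_\infty^{-1}=\det(h)^{-1}$, so this module equals $\det(h)^{-k}$. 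Thus $\mu'(hB)=\det(h)^{-k}\mu'(B)$, and combined with the invariance of $\mu_\PP$ it yields $\mu_k(hB)=\det(h)^{-k}\mu_k(B)$, which is exactly \eqref{muscale} for $\beta=k$.

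To move to $\mu_{kg}(B)=\mu_k(Bg^{-1})$ I would use that right translation by $g^{-1}$ commutes with left multiplication by $h$, so $(hB)g^{-1}=h(Bg^{-1})$ and hence $\mu_{kg}(hB)=\mu_k\big(h(Bg^{-1})\big)=\det(h)^{-k}\mu_k(Bg^{-1})=\det(h)^{-k}\mu_{kg}(B)$; thus $\mu_{kg}$ again obeys \eqref{muscale}. It then remains to see that $\mu_{kg}$ induces a \emph{finite} (hence, after rescaling, probability) measure on $\Gamma\backslash Y$, so that Theorem~\ref{thmKMS}$'$ applies. Choosing a fundamental domain of the product form $\mathcal{F}=\mathcal{F}_\PP\times\M(\Zh)$ for the diagonal $\Gamma$-action on $Y$ (legitimate because $\Gamma$ acts on $\PP$ by left translations), one gets $\nu(\Gamma\backslash Y)=\mu_{kg}(\mathcal{F})=\mu_k(\mathcal{F}_\PP\times\M(\Zh))=\mu_\PP(\mathcal{F}_\PP)\,\mu'\big(\M^k(\A)\cap\M(\Zh)\big)$, where I used $\M(\Zh)g^{-1}=\M(\Zh)$ for $g^{-1}\in\Gl(\Zh)$. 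Both factors are finite and positive: $\mu_\PP(\mathcal{F}_\PP)<\infty$ because the image of $\Sl(\Z)$ is a lattice in $\mathrm{PGL}_n^+(\R)$, and $\M^k(\A)\cap\M(\Zh)\simeq\Zh^{kn}$ is compact. Rescaling $\mu_\PP$ so that this total mass is $1$ produces the required probability measure on $\Gamma\backslash Y$, and Theorem~\ref{thmKMS}$'$ then delivers the claimed KMS$_\beta$ state.
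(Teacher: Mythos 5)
Your proof is correct and follows essentially the same route as the paper's: the paper merely asserts that $\mu_k$ ``naturally'' satisfies the scaling condition (with a parenthetical appeal to the Lebesgue-measure analogy) and then translates by $g$, whereas you supply the actual computation --- the module of $h\oplus\cdots\oplus h$ acting on the Haar measure of $\A^{kn}$ equals $\det(h)^{-k}$ by the product formula --- together with the verification that the induced measure on $\Gamma\backslash Y$ is finite and positive. These details are exactly what the paper's one-line justification is gesturing at, so this is the same argument, just made explicit.
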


\proof
As an additive group, $\M^k(\A)$ is the same as $\A^{kn}$. So the Haar measure on $\A^{kn}$ gives a measure $\mu'$ on $\M(\A)$ supported in $\M^k(\A)$. Let $\mu_k=\mu_\PP\times\mu'$. The measure $\mu_k$ also naturally satisfies the scaling condition \eqref{muscale} for $\beta=k$ (One can think of the case of the Lebesgue measure over $\R^n$ which is a Haar measure if we see $\R^n$ is an additive group). We also can normalize $\mu_k$ so that  the corresponding measure $\nu_k$ is a probability measure on $\Gamma\backslash Y$. So a KMS$_\beta$ state has been constructed. Let $g\in\Gl(\Zh)$. So $g$ acts on $\PP\times\M(\A)$ from the right by multiplying to the second factor on the right. Then we define $\mu_{kg}=\mu_k(\,\cdot\, g^{-1})$. The measure $\mu_{kg}$ is also a measure satisfying the scaling condition \eqref{muscale} with support in $\PP\times\M^k(\A)g$.
\endproof

Thus, we have obtained a construction of a non-empty set of KMS$_\beta$ states for each
$\beta = k \in \{ 1, 2, \ldots, n-1 \}$.

\smallskip
\subsubsection{The ${\rm GL}_2$-case for $\beta=1$}

Now, let us specifically turn to the case $n=2$, $\beta=1$.  In this case, $\PP=\mathrm{PGL}^+_2(\R)=\mathrm{GL}_2^+(\R)/\R^*$ and $\Gamma=\mathrm{SL}_2(\Z)$. We let $$\mathrm{Mat}_2^1(\A)'=\{B=(b_p)_p\in\mathrm{Mat}_2^1(\A)\mid b_p\neq0\}.$$ Let $M=(m_p)_p\in\mathrm{Mat}_2(\A)$ such that $m_p\neq0$ and $\det(m_p)=0$ for all $p$. We have
$$m_p=\begin{pmatrix}n_{11}&n_{12}\\n_{21}&n_{22}\end{pmatrix}\in\mathrm{Mat}_2(\Q_p).$$
Since $m_p\neq0$ and $\det(m_p)=0$, the matrix $m_p$ has rank $1$. So $$\begin{pmatrix}n_{11}\\n_{21}\end{pmatrix}=ap^\alpha\begin{pmatrix}n_{12}\\n_{22}\end{pmatrix},$$ where $a$ is invertible in $\Z_p$. We can assume $\alpha>0$, if not we multiply the matrix $$\begin{pmatrix}0&1\\1&0\end{pmatrix}\in\mathrm{GL}_2(\Z_p)$$ to the right of $m_p$ to swap the columns. Let $$g_p=\begin{pmatrix}1&0\\-ap^\alpha&1\end{pmatrix}\in\mathrm{GL}_2(\Z_p)$$ and $$m_pg_p=\begin{pmatrix}n_{11}&n_{12}\\n_{21}&n_{22}\end{pmatrix}\begin{pmatrix}1&0\\-ap^\alpha&1\end{pmatrix}=\begin{pmatrix}0&n_{12}\\0&n_{22}\end{pmatrix}\in\mathrm{Mat}_2^1(\Q_p)'.$$
Considering all $p$'s, we actually show that for any $M=(m_p)_p\in\mathrm{Mat}_2(\A)$ such that $m_p\neq0$ and $\det(m_p)=0$, there is a $g=(g_p)_p\in\mathrm{GL}_2(\Zh)=\prod_p\mathrm{GL}_2(\Z_p)$ such that $Mg\in\mathrm{Mat}_2^1(\A)'$.

 \smallskip

 Let $B=(b_p)_p\in\mathrm{Mat}_2^1(\A)'$ and $h=(h_p)_p\in\mathrm{GL}_2(\Zh)$. We want to see what happens when $Bh\in\mathrm{Mat}_2^1(\A)'$. For some $p$, we see that $$b_p=\begin{pmatrix}0&c_1\\0&c_2\end{pmatrix}$$ and $c_1,\,c_2$ can not be both $0$. If $$h_p=\begin{pmatrix}a_{11}&a_{12}\\a_{21}&a_{22}\end{pmatrix},$$ then
 $$b_ph_p=\begin{pmatrix}a_{21}c_1&a_{22}c_1\\a_{21}c_2&a_{22}c_2\end{pmatrix}.$$ If $Bh\in\mathrm{Mat}_2^1(\A)'$, this only can happen if $a_{21}=0$. Since $p$ is taken arbitrarily, we actually have shown that $h$ is an upper triangular matrix in $\mathrm{GL}_2(\Zh)$, i.e. the stabilizer of $\mathrm{Mat}_2^1(\A)'$ is the subgroup of upper triangular matrices in $\mathrm{GL}_2(\Zh)$, which is denoted by $U$. Immediately it follows that, if $$\mathrm{Mat}_2^1(\A)'M_1\bigcap\mathrm{Mat}_2^1(\A)'M_2\neq\varnothing ,$$ then $M_1=uM_2$ with $u\in U$. Combining with the previous paragraph, we have shown that $$\{\mathrm{Mat}_2^1(\A)'g\}_{g\in U\backslash\mathrm{GL}_2(\Zh)}$$ forms a partition of the set $\mathrm{Mat}_2^{0,1}(\A)$ and there is a map $$\pi:\,\mathrm{Mat}_2^{0,1}(\A)\rightarrow U\backslash\mathrm{GL}_2(\Zh)$$ such that $\pi(M)=g$ if $M\in\mathrm{Mat}_2^1(\A)'g$. We also consider this map as $$\pi:\,\PP\times\mathrm{Mat}_2^{0,1}(\A)\rightarrow U\backslash\mathrm{GL}_2(\Zh)$$ so that $\pi(x,M)=g$ for $$(x,M)\in\PP\times\mathrm{Mat}_2^{0,1}(\A),$$ if $M\in\mathrm{Mat}_2^1(\A)'g$.

 \smallskip

For $\mathrm{Mat}_2^1(\A)'g$, there is a measure $\mu_g$ supported in $\PP\times\mathrm{Mat}_2^1(\A)'g$ and defining a KMS$_\beta$ state, as we shown in Proposition \ref{betaklem} above.
We are going to show that
the action of $\mathrm{GL}_2^+(\Q)$ on $(\PP\times\mathrm{Mat}_2^1(\A)'g,\,\mu_g)$, with $g\in U\backslash\mathrm{GL}_2(\Zh)$ is ergodic.
To accomplish this, we need the help of the following important theorem in algebraic number theory (see \cite{N}, Chapter III, \S1, Exercise 1 or \cite{CF}, Chapter II, Theorem 15).

\begin{thm}(Strong Approximation Theorem) Let $\mathfrak{p}_0$ be an arbitrary place of $\Q$ and $S$ be a finite set of places such that $\mathfrak{p}_0\notin S$. Given any $(\alpha_{\mathfrak{p}})_{\mathfrak{p}\in S}\in\prod_{\mathfrak{p}\in S}\Q_{\mathfrak{p}}$ and $\epsilon>0$, there exists some $c\in\Q$ such that $|\,c-\alpha_{\mathfrak{p}}|_{\mathfrak{p}}<\epsilon$ for $\mathfrak{p}\in S$ and $|\,c\,|_{\mathfrak{p}}\leq1$ for $\mathfrak{p}\notin S, \mathfrak{p}\neq\mathfrak{p}_0$.

In particular, if we take $\mathfrak{p}_0=\infty$, this says that $\Q$ is dense in $\A$ via the diagonal embedding.
\end{thm}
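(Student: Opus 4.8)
The plan is to prove this by a direct Chinese Remainder Theorem argument, producing the desired $c$ as a single rational number whose denominator is supported precisely at the places where integrality is allowed to fail. Concretely, I would search for $c$ of the shape
\[
c=\frac{m}{D\,\mathfrak{p}_0^{\,j}},\qquad D=\prod_{\mathfrak{p}\in S,\ \mathfrak{p}\neq\infty}\mathfrak{p}^{\,e_\mathfrak{p}},
\]
with integers $m,j$ to be chosen (taking $j=0$ and omitting the $\mathfrak{p}_0$-factor when $\mathfrak{p}_0=\infty$, which is the case actually used in the paper). First, for each finite $\mathfrak{p}\in S$ I use the density of $\Z[1/\mathfrak{p}]$ in $\Q_\mathfrak{p}$ to pick $\beta_\mathfrak{p}=u_\mathfrak{p}/\mathfrak{p}^{e_\mathfrak{p}}$ with $|\alpha_\mathfrak{p}-\beta_\mathfrak{p}|_\mathfrak{p}<\epsilon$; by the ultrametric inequality it then suffices to make $c$ close to the rational $\beta_\mathfrak{p}$ instead of to $\alpha_\mathfrak{p}$, which reduces the whole problem to solving congruences.

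For the case $\mathfrak{p}_0=\infty$ this closes immediately: set $c=m/D$. For any finite prime $q\notin S$ the number $D$ is a $q$-adic unit, so $|c|_q=|m|_q\le 1$ automatically for every $m\in\Z$, giving the integrality condition for free. For $\mathfrak{p}\in S$ one has $|D|_\mathfrak{p}=\mathfrak{p}^{-e_\mathfrak{p}}$ and $D\beta_\mathfrak{p}=u_\mathfrak{p}\prod_{\mathfrak{p}'\neq\mathfrak{p}}\mathfrak{p}'^{\,e_{\mathfrak{p}'}}\in\Z$, so if $m\equiv D\beta_\mathfrak{p}\pmod{\mathfrak{p}^{N_\mathfrak{p}}}$ with $N_\mathfrak{p}$ chosen large enough that $\mathfrak{p}^{\,e_\mathfrak{p}-N_\mathfrak{p}}<\epsilon$, then $|c-\beta_\mathfrak{p}|_\mathfrak{p}\le \mathfrak{p}^{\,e_\mathfrak{p}-N_\mathfrak{p}}<\epsilon$, hence $|c-\alpha_\mathfrak{p}|_\mathfrak{p}<\epsilon$. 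Since the moduli $\mathfrak{p}^{N_\mathfrak{p}}$ for distinct $\mathfrak{p}\in S$ are pairwise coprime, the Chinese Remainder Theorem produces an integer $m$ satisfying all these congruences at once, and this $c$ has all the required properties. This already yields the statement used in the paper, namely that $\Q$ is dense in the finite adeles $\A$.

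The remaining point, and the one I expect to be the main obstacle, is the general excepted place: when $\mathfrak{p}_0$ is a finite prime $q_0$ and $\infty\notin S$, we must additionally force $|c|_\infty\le 1$ (or, if $\infty\in S$, approximate $\alpha_\infty$), and the archimedean absolute value couples all the congruence data together rather than splitting prime-by-prime. The device is to spend the freedom available at $q_0$: the group $\Z[1/q_0]$ is dense in $\R$ because $q_0^{-j}\to 0$. I would therefore fix the data $e_\mathfrak{p},N_\mathfrak{p}$ (hence $D$ and $M=\prod_{\mathfrak{p}\in S}\mathfrak{p}^{N_\mathfrak{p}}$) from $\epsilon$ first, then choose $j$ so large that $M/(Dq_0^{\,j})$ is small, and only afterwards solve the congruences $m\equiv Dq_0^{\,j}\beta_\mathfrak{p}\pmod{\mathfrak{p}^{N_\mathfrak{p}}}$ by the Chinese Remainder Theorem (note $q_0\notin S$, so $D q_0^j$ stays a unit and an integer multiplier at each $\mathfrak{p}\in S$). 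This leaves the residual freedom $m\mapsto m+tM$, $t\in\Z$, which moves $c$ along an arithmetic progression of step $M/(Dq_0^{\,j})<\epsilon$; choosing $t$ appropriately lands $c$ inside $[-1,1]$ (respectively within $\epsilon$ of $\alpha_\infty$) while disturbing neither the $\mathfrak{p}$-adic approximations at $S$ nor the integrality at the primes outside $S\cup\{q_0\}$. The heart of the argument is exactly this decoupling of the single archimedean constraint from the finite congruences, and the correct ordering of the choices (fixing $j$ before solving for $m$) is what makes everything consistent.
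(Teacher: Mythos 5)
Your argument is correct, but there is nothing in the paper to compare it against: the paper states this as a classical result and cites Neukirch (Ch.~III, \S 1, Exercise 1) and Cassels--Fr\"ohlich (Ch.~II, Theorem 15) rather than proving it. Your Chinese Remainder Theorem proof is essentially the standard textbook argument from those references, and it is complete: the reduction from $\alpha_{\mathfrak{p}}$ to a rational $\beta_{\mathfrak{p}}\in\Z[1/\mathfrak{p}]$ via the ultrametric inequality, the observation that a denominator supported on $S\cup\{\mathfrak{p}_0\}$ makes the integrality at all other finite places automatic, and the CRT step are all sound. You also correctly identify and handle the one genuinely delicate point, namely the case of a finite excluded place $\mathfrak{p}_0=q_0$, where the single archimedean constraint must be decoupled from the finite congruences by first fixing the moduli, then choosing $j$ so that the step $M/(Dq_0^{\,j})$ of the residual arithmetic progression is small, and only then solving for $m$; the order of quantifiers there is exactly right. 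It is worth noting that the paper only ever invokes the case $\mathfrak{p}_0=\infty$ (density of $\Q$ in $\A$, used in the ergodicity argument for the ${\rm GL}_2$, $\beta=1$ case), which is the easy half of your proof, so for the purposes of the paper the first paragraph of your argument already suffices.
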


We then have the following result.

\begin{prop}\label{Gl2erg}
Let $\lambda$ be a quasi-invariant measure on $\PP\times\mathrm{Mat}_2^1(\A)'$.
The action of $\mathrm{GL}^+_2(\Q)$ on $(\PP\times\mathrm{Mat}_2^1(\A)',\, \lambda)$ is ergodic. Moreover, if $\lambda$ is normalized, then such a measure is unique.
\end{prop}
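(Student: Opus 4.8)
The plan is to prove ergodicity of the $\mathrm{GL}_2^+(\Q)$-action on $(\PP\times\mathrm{Mat}_2^1(\A)',\lambda)$ by showing that every bounded measurable $\mathrm{GL}_2^+(\Q)$-invariant function is almost everywhere constant, exactly as in the general strategy recalled before Theorem~\ref{main}. The key structural observation is that $\mathrm{Mat}_2^1(\A)'$ consists of matrices of the form $\left(\begin{smallmatrix}0&c_1\\0&c_2\end{smallmatrix}\right)$, so it is essentially parametrized by the second column $(c_p)_p$, an element of $\A^2\setminus\{0\}$ componentwise; the left $\mathrm{GL}_2^+(\Q)$-action is then the natural action on this column vector. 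I would first reduce to analyzing $\mathrm{GL}_2^+(\Q)$-invariant functions on the $\PP$-factor together with this column data, and split the factorization as in the two-step argument of Theorem~\ref{main}: use the free right $\PP$-action to quotient out $\PP$, reducing ergodicity of $\mathrm{GL}_2^+(\Q)$ on $\PP\times\mathrm{Mat}_2^1(\A)'$ to ergodicity of $\mathrm{GL}_2^+(\Q)$ on the column space $\mathrm{Mat}_2^1(\A)'/\PP$, which is a quotient of $\A^2\setminus\{0\}$.

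The heart of the argument will be the Strong Approximation Theorem just quoted. The point is that a $\mathrm{GL}_2^+(\Q)$-invariant function on the column vectors $(c_p)_p$ descends to a function on the orbit space, and Strong Approximation says $\Q$ is dense in $\A$ diagonally. Concretely, I would show that the rational points act densely enough on the adelic column vectors that any continuous (hence, after the usual density argument, any $L^2$) invariant function must be constant: given two generic columns, one produces a rational matrix in $\mathrm{GL}_2^+(\Q)$ carrying one arbitrarily close to the other by approximating the required entries $p$-adically and at the archimedean place simultaneously, using that $\Q$ is dense in $\A$. The normalization factor $\det(g)$ being positive must be tracked so that the approximating matrices genuinely lie in $\mathrm{GL}_2^+(\Q)$ rather than merely $\mathrm{GL}_2(\Q)$; here the real approximation theorem and the freedom in the $\PP$-quotient should absorb the sign.

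I expect the main obstacle to be handling the transition from density of the rational orbit to actual constancy of the invariant $L^2$-function, i.e.\ controlling null sets. Density of orbits gives ergodicity only after one knows the action is suitably continuous and the measure $\lambda$ has no atoms concentrated on exceptional orbits; since $\lambda$ is only assumed quasi-invariant, I would need to invoke the quasi-invariance to ensure invariant functions are well-defined up to $\lambda$-null sets and then use a standard approximation of $L^2$-invariant functions by continuous ones, showing any invariant function agrees almost everywhere with one constant on the dense orbit closure. The second, easier part of the statement—uniqueness of the normalized invariant measure—then follows from the general simplex argument: ergodicity forces the measures satisfying the scaling condition \eqref{muscale} to be extreme points, and a Choquet simplex with the property that every point is extremal is a single point, so the normalized $\lambda$ is unique.
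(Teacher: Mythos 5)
Your high-level plan --- produce a dense rational orbit via strong approximation, work on finite-place truncations with continuous functions, pass to $L^2$ by density, and get uniqueness from extremality in the simplex of scaling measures --- is the same as the paper's, and those parts are fine. The gap is in your reduction step. You propose to quotient out the right $\PP$-action first and to deduce ergodicity of $\mathrm{GL}_2^+(\Q)$ on $\PP\times\mathrm{Mat}_2^1(\A)'$ from ergodicity of $\mathrm{GL}_2^+(\Q)$ on the column space (note: the quotient of $\PP\times\mathrm{Mat}_2^1(\A)'$ by the right $\PP$-action is just $\mathrm{Mat}_2^1(\A)'$, not ``$\mathrm{Mat}_2^1(\A)'/\PP$''). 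That implication fails: since the two actions commute, ergodicity of $\mathrm{GL}_2^+(\Q)$ on the $\PP$-quotient is equivalent to ergodicity of the product group $\mathrm{GL}_2^+(\Q)\times\PP$ on the full space, which is strictly weaker than ergodicity of $\mathrm{GL}_2^+(\Q)$ alone; a $\mathrm{GL}_2^+(\Q)$-invariant function need not be right-$\PP$-invariant, so it does not descend to the quotient you analyze. This is precisely why the proof of Theorem \ref{main} has \emph{two} steps, the second of which (the $\PP$-action on the totally disconnected measure-theoretic quotient $\Gl(\Zh)/H$ is both trivial and ergodic) supplies the missing half. You invoke ``the two-step argument'' but carry out only one of the steps, so constancy of invariant functions in the $\PP$-direction is never established.

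The paper's proof sidesteps this entirely by never quotienting out $\PP$: for each target $(r,(m_p)_{p\in J})\in\PP\times\mathrm{Mat}_2^1(\A)'_J$ it uses strong approximation at the primes in $J$ \emph{and} approximation at the archimedean place \emph{simultaneously} to find $Q\in\mathrm{GL}_2^+(\Q)$ with $Q(e,E)$ close to the target, where $E=\left(\begin{smallmatrix}0&1\\0&1\end{smallmatrix}\right)$. Thus a single $\mathrm{GL}_2^+(\Q)$-orbit is dense in all of $\PP\times\mathrm{Mat}_2^1(\A)'_J$, including the $\PP$-coordinate, and continuous invariant functions there are constant; this also settles your worry about $\det Q>0$ for free, since $Q$ is forced to be close to $r\in\mathrm{GL}_2^+(\R)$ at the real place. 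To repair your argument, either adopt this simultaneous approximation, or add the complementary step proving ergodicity (and triviality) of the $\PP$-action on the quotient by $\mathrm{GL}_2^+(\Q)$, as in step (2) of Theorem \ref{main}.
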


Let $\mathrm{Mat}_2^1(\A)'_J$ be the image of the canonical map $$\mathrm{Mat}_2^1(\A)'\rightarrow\prod_{p\in J}\mathrm{Mat}_2^1(\Q_p)$$ for some finite set $J$ of primes over $\Q$. Note that $$\PP=\mathrm{PGL}^+_2(\R)=\mathrm{GL}_2^+(\R)/\R^*.$$ For any $$r=\begin{pmatrix}r_{11}&r_{12}\\r_{21}&r_{22}\end{pmatrix}\in\mathrm{GL}_2^+(\R) \ \ \text{ and } \ \  (m_p)_{p\in J}=\begin{pmatrix}\begin{pmatrix}0&m_1^{(p)}\\0&m_2^{(p)}\end{pmatrix}\end{pmatrix}_{p\in J}\in\mathrm{Mat}_2^1(\A)'_J, $$
by the Strong Approximation Theorem above, there is a
$$Q=\begin{pmatrix}q_{11}&q_{12}\\q_{21}&q_{22}\end{pmatrix}\in\mathrm{GL}^+_2(\Q)$$ such that
\begin{itemize}
\item[] $q_{11}$ is close enough to $(r_{11},(m^{(p)}_1)_{p\in J})$,
\item[] $q_{22}$ is close enough to $(r_{22},(m^{(p)}_2)_{p\in J})$,
\item[] $q_{12}$ is close enough to $(r_{12},(0)_{p\in J})$,
\item[] $q_{21}$ is close enough to $(r_{21},(0)_{p\in J})$,
\end{itemize}
where $(r_{11},(m^{(p)}_1)_{p\in J})$ means an element in $\R\times\prod_{p\in J}\Q_p$, and we see the rational number $q_{11}$ as an element in $\R\times\prod_{p\in J}\Q_p$ via the diagonal embedding.

If we let $e$ be the identity element in $\PP$ and $$E=\begin{pmatrix}0&1\\0&1\end{pmatrix},$$ then the product $Q(e,E)$ is close enough to the element $$(r,(m_p)_{p\in J})\in\PP\times\mathrm{Mat}_2^1(\A)'_J.$$ So the image of the product $\mathrm{GL}^+_2(\Q)(e,I)$ is dense in $\PP\times\mathrm{Mat}_2^1(\A)'_J$. We roughly see $\Gl^+(\Q)$ has a subgroup which is also dense in the group of $\Gl^+(\R)\times\prod_J\Q_p^*\times\prod_J\Q_p^*$ (by neglecting the small differences). So the $\mathrm{GL}^+_2(\Q)$-invariant functions on $\PP\times\mathrm{Mat}_2^1(\A)'_J$ are almost constant. Those functions also can be seen as functions on $\PP\times\mathrm{Mat}_2^1(\A)'$. By taking all the possible $J$'s, we find a dense subspace of $\mathrm{GL}^+_2(\Q)$ invariant functions over $\PP\times\mathrm{Mat}_2^1(\A)'$ consisting of constants.
Thus we obtain that the action of $\mathrm{GL}^+_2(\Q)$ on $(\PP\times\mathrm{Mat}_2^1(\A)',\, \lambda)$ is ergodic. Moreover,  we also see that, if $\lambda$ is normalized, for example $\lambda(\Gamma\backslash\PP\times\mathrm{Mat}_2^1(\Zh)')=1$, then such a measure is unique.
\endproof

 Let $\rho$ be a measure on $\PP\times\mathrm{Mat}_2^{0,1}(\A)$ that satisfies conditions in Theorem \ref{thmKMS}$'$. So $\mathrm{GL}^+_2(\Q)$ acts on $$(\PP\times\mathrm{Mat}_2^1(\A)'g,\rho|_{\mathrm{Mat}_2^1(\A)'g}),$$ with $g\in U\backslash\mathrm{GL}_2(\Zh)$ ergodically. If we set $\kappa=\rho\circ\pi^{-1}$, then $$\rho=\int_{U\backslash\mathrm{GL}_2(\Zh)}\mu_gd\kappa(g).$$ So we have obtained the following result.

 \begin{prop}For $n=2,\,\beta=1$, the set of extremal KMS$_1$ states on the $\mathrm{GL}_2$-Connes-Marcolli system is identified with the set of $U\backslash\mathrm{GL}_2(\Zh)$. \end{prop}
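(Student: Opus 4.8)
The plan is to identify the extremal KMS$_1$ states with the ergodic components of the $\mathrm{GL}_2^+(\Q)$-action, and then to read these components off from the partition $\{\mathrm{Mat}_2^1(\A)'g\}_{g\in U\backslash\mathrm{GL}_2(\Zh)}$ of $\mathrm{Mat}_2^{0,1}(\A)$ constructed above. First I would recall that, by Theorem \ref{thmKMS}$'$ together with the analysis preceding Proposition \ref{betaklem}, every KMS$_1$ state is given by a normalized measure $\rho$ supported on $\PP\times\mathrm{Mat}_2^{0,1}(\A)$ satisfying the scaling condition \eqref{muscale} for $\beta=1$. The correspondence $\mu\mapsto\varphi$ is affine, so the set of KMS$_1$ states is a Choquet simplex whose extreme points correspond exactly to the measures $\rho$ for which the $\mathrm{GL}_2^+(\Q)$-action is ergodic; this is the standard characterization of extremal invariant measures.

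Next I would invoke the decomposition $\rho=\int_{U\backslash\mathrm{GL}_2(\Zh)}\mu_g\,d\kappa(g)$, with $\kappa=\rho\circ\pi^{-1}$, which writes an arbitrary KMS$_1$ measure as an integral of the fiber measures $\mu_g$ over the base $U\backslash\mathrm{GL}_2(\Zh)$. Each $\mu_g$ is supported on the single piece $\PP\times\mathrm{Mat}_2^1(\A)'g$, and by Proposition \ref{Gl2erg} the action of $\mathrm{GL}_2^+(\Q)$ on $(\PP\times\mathrm{Mat}_2^1(\A)'g,\mu_g)$ is ergodic, while the normalized $\mu_g$ is the unique such measure on that piece. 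Hence each $\mu_g$ defines an extremal KMS$_1$ state. Since the pieces $\mathrm{Mat}_2^1(\A)'g$ are pairwise disjoint as $g$ ranges over $U\backslash\mathrm{GL}_2(\Zh)$, the $\mu_g$ are mutually singular, so distinct cosets yield distinct extremal states and the assignment $g\mapsto\mu_g$ is injective.

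To close the argument I would establish surjectivity onto the extremal states. If $\rho$ is extremal, then ergodicity forces the base measure $\kappa$ to be a point mass at some $g_0\in U\backslash\mathrm{GL}_2(\Zh)$: otherwise the decomposition above would present $\rho$ as a nontrivial integral of the mutually singular states $\mu_g$, that is, a proper convex combination, contradicting extremality. Consequently $\rho=\mu_{g_0}$, so the extremal KMS$_1$ states are exactly $\{\mu_g\}_{g\in U\backslash\mathrm{GL}_2(\Zh)}$, and the map $g\mapsto\mu_g$ is the desired bijection.

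I expect the main obstacle to lie in verifying that $\rho=\int_{U\backslash\mathrm{GL}_2(\Zh)}\mu_g\,d\kappa(g)$ is genuinely the ergodic (extremal) decomposition of $\rho$ and not merely an arbitrary disintegration along $\pi$: one must check that the fiber measures appearing are precisely the normalized ergodic measures pinned down by the uniqueness clause of Proposition \ref{Gl2erg}, and that this disintegration is essentially unique. The compatibility between the normalization of each $\mu_g$ and the requirement that $\rho$ induce a probability measure on $\Gamma\backslash Y$ also requires care, but it is dictated by the construction in Proposition \ref{betaklem}.
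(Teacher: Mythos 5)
Your proposal is correct and follows essentially the same route as the paper: the paper likewise reduces to the partition $\{\mathrm{Mat}_2^1(\A)'g\}_{g\in U\backslash\mathrm{GL}_2(\Zh)}$, invokes the ergodicity and uniqueness statement of Proposition \ref{Gl2erg} on each piece, and uses the disintegration $\rho=\int_{U\backslash\mathrm{GL}_2(\Zh)}\mu_g\,d\kappa(g)$ with $\kappa=\rho\circ\pi^{-1}$ to conclude. Your write-up is in fact more explicit than the paper's (which leaves the injectivity via mutual singularity and the ``extremal $\Rightarrow$ $\kappa$ is a point mass'' step implicit), but the underlying argument is the same.
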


\smallskip
\subsubsection{The case $\beta\in(n-1,n]$: existence}


We have proved in Theorem \ref{main} that, in the range $\beta\in(n-1,n]$, if the set of
KMS states is non-empty, then it consists of a single point. Here we show the existence.

\begin{prop}\label{existprop}
For $\beta\in(n-1,n]$, the set of KMS$_\beta$ states is non-empty, hence by
Theorem \ref{main} it consists of a unique element.
\end{prop}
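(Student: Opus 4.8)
The plan is to use the correspondence of Theorem~\ref{thmKMS}$'$: it suffices to exhibit a single Radon measure $\mu$ on $\PP\times\M(\A)$ satisfying the scaling condition \eqref{muscale} for the given $\beta$ and inducing a probability measure $\nu$ on $\Gamma\backslash Y$; uniqueness is then automatic from Theorem~\ref{main}. I would build $\mu$ as a product of an archimedean factor and one local factor at each prime, the local factors being chosen so that the exponent $\beta$ is produced, rather than the ``trivial'' exponent $n$ that the additive Haar measure alone would give.

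For each prime $p$, let $dg$ denote the additive Haar measure on $\M(\Q_p)$ normalized by $\mathrm{vol}(\M(\Z_p))=1$, and set
\begin{equation*}
d\mu_p = |\det g|_p^{\,\beta-n}\,dg = |\det g|_p^{\,\beta}\,d^\ast g,
\end{equation*}
where $d^\ast g=|\det g|_p^{-n}\,dg$ is the bi-invariant Haar measure of $\Gl(\Q_p)$. Since left translation by $h\in\Gl(\Q_p)$ preserves $d^\ast g$ and multiplies $|\det g|_p^{\beta}$ by $|\det h|_p^{\beta}$, the measure $\mu_p$ is supported on the full-rank matrices and satisfies the local scaling $\mu_p(hB)=|\det h|_p^{\beta}\mu_p(B)$. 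The crucial computation is its total mass on $\M(\Z_p)$, a local Igusa/Tamagawa zeta integral:
\begin{equation*}
Z_p:=\int_{\M(\Z_p)}|\det g|_p^{\,\beta-n}\,dg=\prod_{i=1}^{n}\frac{1-p^{-i}}{1-p^{-(\beta-n+i)}}=\mathrm{vol}\big(\Gl(\Z_p)\big)\,\zeta(\Gamma,\M(p),\beta),
\end{equation*}
where $\zeta(\Gamma,\M(p),\beta)$ is the quantity \eqref{zetaMp} of Lemma~\ref{sumS}. This converges precisely when $\beta>n-1$ (the binding factor is $i=1$, whose denominator degenerates at $\beta=n-1$), which is exactly our hypothesis. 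I then normalize to a probability measure $\tilde\mu_p=Z_p^{-1}\mu_p$ on $\M(\Z_p)$, observing that the local scaling relation is unchanged by this constant rescaling.

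Finally, let $\mu_\infty$ be the $\Gl^+(\R)$-invariant measure on $\PP$, normalized so that the measure it induces on $\Gamma\backslash\PP$ has total mass $1$; this is legitimate because $\Gamma=\Sl(\Z)$ is a lattice in $\PP$, so $\Gamma\backslash\PP$ has finite invariant volume. Put $\mu=\mu_\infty\otimes\bigotimes_p\tilde\mu_p$ on $Y=\PP\times\M(\Zh)$, extended to $X=\PP\times\M(\A)$ by Lemma~\ref{ext}. For $g\in\Gl^+(\Q)$ the diagonal action leaves $\mu_\infty$ invariant (it acts through $\mathrm{PGL}$) and scales each $\tilde\mu_p$ by $|\det g|_p^{\beta}$, the per-prime normalizing constants cancelling on the two sides of the scaling relation; the product formula $\prod_p|\det g|_p=\det(g)^{-1}$ then yields $\mu(gB)=\det(g)^{-\beta}\mu(B)$, i.e.\ \eqref{muscale}. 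The measure is $\Gamma$-invariant (each $\gamma\in\Gamma$ has $|\det\gamma|_p=1$), and the induced measure on $\Gamma\backslash Y$ has total mass $\mu_\infty(\Gamma\backslash\PP)\prod_p\tilde\mu_p(\M(\Z_p))=1$, so $\nu$ is a probability measure and $\mu$ defines a KMS$_\beta$ state.

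The main obstacle is the local integral $Z_p$: one must evaluate it in closed form, verify that it is finite exactly in the range $\beta>n-1$, and identify its value as $\mathrm{vol}(\Gl(\Z_p))\,\zeta(\Gamma,\M(p),\beta)$. This identity is what makes the construction consistent with the polarization formula \eqref{Polar1}, since it forces $\tilde\mu_p(\Gl(\Z_p))=\zeta(\Gamma,\M(p),\beta)^{-1}=\nu(\Gamma\backslash Y_p)$. A secondary subtlety, easily dispatched, is that normalizing each local factor separately does not disturb the global scaling condition, precisely because \eqref{muscale} is a homogeneity relation in which the normalizing constants cancel.
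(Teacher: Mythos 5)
Your construction is correct and is essentially the paper's: both build the KMS$_\beta$ state as a product measure $\mu_\infty\otimes\bigotimes_p\mu_p$ with the archimedean factor normalized on $\Gamma\backslash\PP$ and each local factor a probability measure on $\M(\Z_p)$ concentrated on $\Gl(\Q_p)$, giving $\Gl(\Z_p)$ the mass $\zeta(\Gamma,\M(p),\beta)^{-1}$ and scaling by $|\det|_p^{\beta}$, with the global condition \eqref{muscale} following from the product formula. The only difference is presentational: the paper obtains the local factor by extending Haar measure on $\Gl(\Z_p)$ over cosets, while you realize the same measure as the density $|\det g|_p^{\beta-n}$ against additive Haar measure and check finiteness and normalization via the local Igusa zeta integral, which transparently exhibits $\beta>n-1$ as the exact range of convergence.
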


\proof
A KMS state can be constructed in the following way (\emph{cf.} \cite{LLN}, the first part of section 4). Let $n-1<\beta\leq n$. At place $p$, there is the normalized Haar measure $\mu_p$ on the compact group $\Gl(\Z_p)$ such that
$$\mu_p(\Gl(\Z_p))=(1-p^{-\beta+n-1})(1-p^{-\beta+n-2})\cdots(1-p^{-\beta}).$$
This measure can be uniquely extended (we still call it $\mu_p$) to the group $\Gl(\Q_p)$ by
$$\mu_p(K)=\sum\limits_{g\in\Gl(\Z_p)\backslash\Gl(\Q_p)}|\det(g)|_p^{-\beta}\mu(gK\cap\Gl(\Z_p)),$$
for compact $K\subset\Gl(\Q_p)$. Here $|\cdot|_p$ is the standard $p$-adic norm of $\Q_p$. Set $\mu_\PP$ to be the Haar measure of $\PP$. Since $\Gamma$ is a lattice of $\Sl(\R)$, we normalize $\mu_\PP$ so that $\mu_\PP(\Gamma\backslash\PP)=1$. Then we let $$\mu_\beta=\mu_\PP\times\prod\limits_p\mu_p.$$ The measure $\mu_\beta$ satisfies the conditions in Theorem \ref{thmKMS}($'$), hence there is a KMS at inverse temperature $\beta$.
\endproof

\smallskip
\subsubsection{The case $\beta>n$}
As before, we use the fact that
there is a one-to-one correspondence between the KMS$_\beta$ states and the Borel measures
satisfying the conditions in Theorem \ref{thmKMS}$'$. There may be many of these, like the one
constructed in the previous subsection. Using the same argument in \cite{LLN}, Remark 4.8 or \cite{CM2}, Theorem 3.97, we focus on classifying the extremal states.

\begin{prop}\label{Gibbsprop}
Let $\mu$ be a Borel measure satisfying the conditions in Theorem \ref{thmKMS}$'$, with
$\beta>n$, and $\nu$ be the induced probability measure on $\Gamma\backslash Y$.
\end{prop}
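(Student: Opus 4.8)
The plan is to use the summability of $(\Gamma,\M^+(\Z),\beta)$ in the regime $\beta>n$ to force $\mu$ onto the invertible locus, and then to decompose $\mu$ into ergodic Gibbs components. First I would invoke Lemma~\ref{sumS}: for $\beta>n$ the datum $(\Gamma,\M^+(\Z),\beta)$ is summable, with $\zeta(\Gamma,\M^+(\Z),\beta)$ given by \eqref{zetaMZ}. Taking the semigroup $S=\M^+(\Z)$ and the subset $Y_0=\PP\times\Gl(\Zh)$ in the polarization formula \eqref{Polar1}, one checks the hypotheses directly: by Corollary~\ref{Keycor} the non-invertible part is $\mu$-null so $SY_0$ is conull, while $\Gl^+(\Q)\cap\Gl(\Zh)=\Gamma$ gives $gY_0\cap Y_0=\emptyset$ for $g\notin\Gamma$. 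Evaluating \eqref{Polar1} at the constant function then yields $1=\nu(\Gamma\backslash Y)=\zeta(\Gamma,\M^+(\Z),\beta)\,\nu(\Gamma\backslash Y_0)$, so the everywhere-integral-invertible part $Y_0$ carries positive mass $\zeta(\Gamma,\M^+(\Z),\beta)^{-1}$.

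Next I would record the elementary scaling identity $\Gl^+(\Q)\,\M(\Zh)=\M(\A)$ already quoted in Corollary~\ref{Keycor}: given $(m_p)_p\in\M(\A)$, only finitely many $m_p$ fall outside $\M(\Z_p)$, so a single positive integer $N$ clears every denominator and $N\,(m_p)_p\in\M(\Zh)$; then $g=N^{-1}I\in\Gl^+(\Q)$ satisfies $g\cdot\big(N\,(m_p)_p\big)=(m_p)_p$. Hence the $\Gl^+(\Q)$-saturation of $Y_0$ is all of $\PP\times\Gl(\A)$, and combined with the previous step the invertible locus $\PP\times\Gl(\A)$ carries full $\mu$-measure.

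With $\mu$ concentrated on $\PP\times\Gl(\A)$, the remaining task is the Gibbs description. Since $\beta>n$ lies outside the ergodic range of Theorem~\ref{main}, the $\Gl^+(\Q)$-action is no longer ergodic and the Choquet simplex of admissible measures acquires many extreme points; I would identify these with the ergodic components of the action. For a representative $y=\rho\in\PP\times\Gl(\Zh)$ of a component one computes $G_y=\Gl^+(\Q)\cap\M(\Zh)=\M^+(\Z)$, so that in the representation $\pi_y$ the Hamiltonian $H=\log\det$ has partition function $\zeta(\Gamma,\M^+(\Z),\beta)$ and the associated extremal (Gibbs) state takes the form
\begin{equation*}
\varphi_\rho(f)=\frac{1}{\zeta(\Gamma,\M^+(\Z),\beta)}\sum_{g\in\Gamma\backslash\M^+(\Z)}\det(g)^{-\beta}\,f(I,g\rho),
\end{equation*}
the real coordinate being integrated against the Haar measure $\mu_\PP$ as forced by the real-approximation argument of Theorem~\ref{main}. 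The scaling condition \eqref{muscale} forces precisely these orbit weights, and the normalization is the convergent sum \eqref{zetaMZ}. The extremal states are thereby parametrized by the orbit space of $\PP\times\Gl(\A)$ under $\Gl^+(\Q)$, which after the real approximation theorem reduces to the compact adelic data carried by $\Gl(\Zh)$.

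I expect the genuine difficulty to lie in this last identification. The locus $\M'(\A)$ supporting $\mu$ is \emph{not} the restricted product $\Gl(\A)$: it only demands that each local component be invertible over $\Q_p$, with integrality but not local-unit behavior at almost every place. One must therefore track carefully which $\Gl^+(\Q)$-orbits actually occur, verify that the weights $\det(g)^{-\beta}$ along an orbit assemble into a genuine probability measure, and reconcile the finite-place data with the real factor $\PP$ through the real approximation theorem so as to pin the parametrizing set down exactly. Showing that this bookkeeping returns precisely the claimed parametrization, with distinct parameters giving distinct extremal states, is the main obstacle.
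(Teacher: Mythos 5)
Your overall strategy matches the paper's (summability of $(\Gamma,\M^+(\Z),\beta)$ for $\beta>n$, polarization, full measure of the invertible locus, then point-mass/Gibbs components parametrized by $\Gamma\backslash(\PP\times\Gl(\Zh))$), but your opening step has a genuine gap, and it is exactly the issue you flag at the end without resolving. To apply the polarization formula \eqref{Polar1} with $S=\M^+(\Z)$ and $Y_0=\PP\times\Gl(\Zh)$ you need $SY_0$ to be $\mu$-conull in $Y$. But $\M^+(\Z)(\PP\times\Gl(\Zh))=\PP\times(\M(\Zh)\cap\Gl(\A))$ is the set where $\det(m_p)$ is a $p$-adic unit for all but finitely many $p$, whereas Corollary \ref{Keycor} only gives conullity of the strictly larger set $\PP\times\M'(\A)$, where each $m_p$ is merely invertible over $\Q_p$ (for instance $m_p=\mathrm{diag}(p,1,\dots,1)$ for every $p$ lies in the latter but not the former). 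So the conullity of $SY_0$ is essentially the conclusion you are trying to prove, and citing Corollary \ref{Keycor} for it is circular.

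The paper closes this gap by running the polarization formula only for the semigroups $\M(J)$ with $J$ a finite set of primes and $Y_0=Y_J$ --- for these Corollary \ref{Keycor} genuinely verifies the conullity hypothesis --- obtaining $\nu(\Gamma\backslash Y_J)=\zeta(\Gamma,\M(J),\beta)^{-1}$, and then letting $J$ exhaust all primes. Since $Y_J$ decreases to $\PP\times\Gl(\Zh)$ and the Euler product $\zeta(\Gamma,\M^+(\Z),\beta)=\prod_p\zeta(\Gamma,\M(p),\beta)$ converges for $\beta>n$ (this is precisely where $\beta>n$ rather than $\beta>n-1$ enters), one first gets $\nu(\Gamma\backslash\PP\times\Gl(\Zh))=\zeta(\Gamma,\M^+(\Z),\beta)^{-1}>0$, and only then deduces from the scaling identity \eqref{eqmes} that $\nu(\Gamma\backslash\M^+(\Z)(\PP\times\Gl(\Zh)))=1$, i.e.\ that $\PP\times\Gl(\A)$ is conull. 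You would also need the class-number-one fact $\Gl^+(\Q)\Gl(\Zh)=\Gl(\A)$ to see that every $\Gl^+(\Q)$-orbit in this conull set meets $\PP\times\Gl(\Zh)$, which is what pins the parametrizing set of extremal states down to $\Gamma\backslash(\PP\times\Gl(\Zh))$; your closing paragraph correctly identifies this bookkeeping as the crux but leaves it open.
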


\proof
On one hand, from the formula \eqref{eqmes}, we have $$\nu(\Gamma\backslash\M^+(\Z)(\PP\times\Gl(\Zh)))=\zeta(\Gamma,\M^+(\Z),\beta)\nu(\Gamma\backslash\PP\times\Gl(\Zh)).$$ On the other hand, let $J$ be a finite set of primes. By formula \ref{Polar1}, we have $$1=\nu(\Gamma\backslash Y)=\zeta(\Gamma,\M(J),\beta)\nu(\Gamma\backslash Y_J).$$ So $\nu(\Gamma\backslash Y_J)=\zeta(\Gamma,\M(J),\beta)^{-1}$. Enlarging $J$, we finally have $$\nu(\Gamma\backslash\PP\times\Gl(\Zh))=\zeta(\Gamma,\M^+(\Z),\beta)^{-1}.$$ This shows that $$\nu(\Gamma\backslash\Gl^+(\Z)(\PP\times\Gl(\Zh)))=\zeta(\Gamma,\M^+(\Z),\beta)\nu(\Gamma\backslash\PP\times\Gl(\Zh))=1.$$ So $\M^+(\Z)(\PP\times\Gl(\Zh))$ is a subset of full measure of $\PP\times\M(\Zh)$. By multiplying $\Gl^+(\Q)$, we have  that $$\Gl^+(\Q)(\PP\times\Gl(\Zh))=\PP\times\Gl(\A)$$ is a subset of full measure of $\PP\times\M(\A)$. Here we use two facts:
\begin{enumerate}
\item $\Gl^+(\Q)\Gl(\Zh)=\Gl(\A)$,
\item $\Gl^+(\Q)\M(\Zh)=\M(\A)$.
\end{enumerate}
(1) holds because $\Gl$ (as an algebraic group over $\Q$) has class number 1.

\noindent (2) is more straightforward. Let $m=(m_p)_p\in\M(\A)$. By the definition of the Adele ring,
there are finitely many $m_p$'s lie in $\M(\Q_p)$. Each $m_p$ is an $n\times n$ matrix with entries in $\Q_p$.
So each entry has the form $p^{-k}l_p$ with $l_p\in\Z_p$. So we can find a integer $n_p$ (we also think $n_p$ as an element in $\Gl^+(\Q)$) such that $n_pm_p\in\M(\Z_p)$. Since only finitely many $m_p$'s lie in $\M(\Q_p)$, we can take the finite product of all these $n_p$'s, say $\tilde{n}$. Thus, $\tilde{n}m\in\M(\Zh)$ with $\tilde{n}\in\Gl^+(\Q)$. So $m\in\Gl^+(\Q)\M(\Zh)$. Thus, $\M(\A)\subset\Gl^+(\Q)\M(\Zh)$. Since $\Gl^+(\Q)\subset\M(\A)$ and $\M(\Zh)\subset\M(\A)$,
we also have $\M(\A)=\Gl^+(\Q)\M(\Zh)$.

\smallskip

Given a $y\in\Gamma\backslash\PP\times\Gl(\Zh)$, the point mass measure $\nu_y$ at $y$ with mass $$\zeta(\Gamma,\M^+(\Z),\beta)^{-1}$$ determines an ergodic measure $\mu_y$ on $\PP\times\Gl(\A)$ satisfying the scaling condition \eqref{muscale} such that $$\mu_y(g\Gamma y)=\det(g)^{-\beta}\mu_y(\Gamma y),$$ with $g\in\Gl^+(\Q)$. This map $y\mapsto\mu_y$ gives a one-to-one correspondence between the set $\Gamma\backslash\PP\times\Gl(\Zh)$ and the set of extremal KMS$_\beta$ states on $\mathcal{A}$ (\emph{cf.} \cite{CM2}, Theorem 3.97).
\endproof

\bibliographystyle{amsplain}

\end{document}